\documentclass[12pt,reqno,a4paper]{amspaper}
\usepackage{mathdef}
\usepackage{natbib}
\newcommand*\tageq{\refstepcounter{equation}\tag{\theequation}}
\usepackage[utf8]{inputenc}
\usepackage{comment}
\usepackage{setspace}
\usepackage{graphicx}
\usepackage{multirow}
\usepackage{lscape}
\usepackage{afterpage}
\usepackage{caption}
\usepackage{subcaption}
\usepackage[section]{placeins}
\usepackage{flafter}
\usepackage{rotating}
\usepackage{eufrak}
\usepackage{mathabx}
\usepackage{mathrsfs}
\usepackage{bbm}
\usepackage{url}
\usepackage{todonotes}
\DeclareMathAlphabet{\mathpzc}{OT1}{pzc}{m}{it}
\newcommand{\opnorm}[1]{\vvvert{#1}\vvvert_{\mathcal{L}}}
\newcommand{\snorm}[1]{\vvvert{#1}\vvvert}

\usepackage{enumitem}
\usepackage{color}
\AtBeginDocument{%
  \addtolength\abovedisplayskip{-0.045\baselineskip}%
  \addtolength\belowdisplayskip{-0.045\baselineskip}%
}

\setlength{\marginparwidth}{2.5cm}
\addtolength{\textheight}{1cm}
\makeatletter
\newcommand{\leqnomode}{\tagsleft@true\let\veqno\@@leqno}
\newcommand{\reqnomode}{\tagsleft@false\let\veqno\@@eqno}
\makeatother
\newcommand{\E}{\mean}
\newcommand{\Var}{\var}

\newcommand{\Hspace}{\mathbbm{H}}

\newcommand{\F}{\mathcal{F}}

\newcommand{\Fm}{\mathscr{F}}
\newcommand{\Tr}{\trace}
\newcommand{\Bcom}{\mathcal{L}^+_{\infty}}
\newcommand{\lo}{\mathcal{L}}

\newcommand{\Bol}{\mathcal{L}(H)}

\newcommand{\inprod}[2]{\langle #1, #2 \rangle}

\begin{document}
\reqnomode
\title[A note on Herglotz's theorem for time series on function spaces]%
{A note on Herglotz's theorem for time series on function spaces}
\author{Anne van Delft}
\thanks{Ruhr-Universit{\"a}t Bochum, Fakult{\"a}t f{\"u}r Mathematik, 44780 Bochum, Germany}
\thanks{{\em E-mail address:} anne.vandelft@rub.de  (A.~van Delft)} 
\author{Michael Eichler}
\thanks{Department of Quantitative Economics,
Maastricht University, P.O.~Box 616, 6200 MD Maastricht, The Netherlands}
\thanks{{\em E-mail address:} m.eichler@maastrichtuniversity.nl (M.~Eichler)}

\dedicatory{{\upshape July 10, 2019}}
\begin{abstract}
In this article, we prove Herglotz's theorem for Hilbert-valued time series. This requires the notion of an operator-valued measure, which we shall make precise for our setting. Herglotz's theorem for functional time series allows to generalize existing results that are central to frequency domain analysis on the function space. In particular, we use this result to prove the existence of a functional Cram{\'e}r representation of a large class of processes, including those with jumps in the spectral distribution and long-memory processes.  We furthermore obtain an optimal finite dimensional reduction of the time series under weaker assumptions than available in the literature. The results of this paper therefore enable Fourier analysis for processes of which the spectral density operator does not necessarily exist.

\noindent
{{\itshape Keywords:} Functional data analysis, spectral analysis, time series}
\smallskip 

\noindent 
{{\em 2010 Mathematics Subject Classification}. Primary: 60G10;
Secondary: 62M15.}
\end{abstract}

\maketitle

\section{introduction}

As a result of a surge in data storage techniques, many data sets can be viewed as being sampled continuously on their domain of definition. It is therefore natural to think of the data points as being objects and embed them into an appropriate mathematical space that accounts for the particular properties and structure of the space. The development of meaningful statistical treatment of these objects is known as functional data analysis and views each random element as a point in a function space. Not surprisingly, this has become an active field of research in recent years. If the random functions can be considered an ordered collection $\{X_t\}_{t \in \znum}$ we call this collection dependent functional data or a functional time series. The function space where each $X_t$ takes its values is usually assumed to be the Hilbert space $L^2([0,1])$, in which case we can parametrize our functions $\tau \mapsto X_t(\tau), \tau \in [0,1]$. 

While the literature on classical time series finds its origin in harmonic analysis, the literature on its functional counterpart started in the time domain. The frequency domain arises however quite naturally in the analysis of dependent functional data. The second order dependence structure encodes the relevant information on the shape and smoothness properties of the random curves. It provides a way to optimally extract the intrinsically infinite variation carried by the random functions to lower dimension. In case the functional time series is weakly stationary, the second order dependence structure can be specified in the time domain through an infinite sequence of lag $h$ covariance operators
\[
\mathcal{C}_{h}= \mean\big[(X_0-m) \otimes (X_h-m) \big], \qquad h \in \znum,
\]
where $m$ is the mean function of $X$, which is the unique element of $H$ such that 
\[
\langle m, g \rangle =\E\langle X, g \rangle, \qquad g \in H.
\]  
Unlike independent functional data, where one only needs to consider the within curves dynamics as captured by the operator $\mathcal{C}_0$, functional time series require to take into account also \emph{all} the between curve dynamics as given by the infinite sequence of lag covariance operators $\mathcal{C}_h$ for $ h\ne 0$. The full second order dynamics are then more straightforwardly captured in the frequency domain, and an initial framework for Fourier analysis of random functions was therefore developed in \citet{PanarTav2013a}. Their framework of frequency domain-based inference is however restricted to processes for which the notion of a spectral density operator, defined as the Fourier transform of the sequence of $h$-lag covariance operators,
\[
\F_{\omega} =\frac{1}{2\pi} \sum_{h \in \znum} \mathcal{C}_h e^{-\im h \omega}, \qquad \omega \in [-\pi,\pi], 
\] 
exists. 
In this case, the autocovariance operator at lag $h$ can itself be represented as
\begin{align} \label{covInvFT1}
\mathcal{C}_h = \int_{-\pi}^{\pi} e^{\im h \omega} \mathcal{F}_{\omega} d\omega,
\end{align}
where the convergence holds in the appropriate operator norm. For processes of which $\F_{\omega}$ has absolutely summable eigenvalues,  \citet{Panar2013b} derived a functional Cram{\'e}r representation and showed that the eigenfunctions of $\F_{\omega}$ allow a harmonic principal component analysis, providing an optimal representation of the time series in finite dimension. \textcolor{black}{This was later relaxed in Tavakoli (2014) to processes with only weak spectral density operators implicitly defined by \eqref{covInvFT1}.} An optimal finite dimensional representation of a functional time series was also derived by \cite{Hormann2015} for $L^2_m$-approximable sequences under slightly different assumptions. In both works, the spectral density operator can be seen to take the same role as the covariance operator for independent functional data in the classical Karhunen--Lo{\`e}ve expansion \citep{Karhunen1947,Loeve1948}. \Citet{vde16}~extended frequency domain-based inference for functional data to locally stationary processes allowing thus to relax the notion of weak stationarity and to consider time-dependent second order dynamics through a time-varying spectral density operator. Since frequency domain-based inference does not require structural modeling assumptions other than weak dependence conditions, it has proved helpful in the construction of stationarity tests \citep[see e.g.,][]{avd16,vDCD18}\textcolor{black}{, but also in a variety of other inference problems \citep[see e.g.,][]{PhPan2018,LPapSap2018,HorKokNis17,vDD19}}. 

The aforementioned literature is restricted to processes for which the spectral density operators exist at all frequencies as elements of the space of trace class operators, $S_1(H)$. \textcolor{black}{This excludes many interesting processes for which the spectral density operator is not well defined at all frequencies or that have a spectral measure with discontinuities. For instance, processes with long memory caused by highly persistent cyclical or seasonal components arise quite naturally in a variety of fields such as hydrology or economics \citep[e.g.][]{McPol14}. A particular example of functional data are the supply and demand curves for electricity prices, which usually show a strong daily as well as weakly pattern \citep[e.g.][]{ZS16}.}
Since statistical inference techniques for this type of data must also take into account their within- and between curves dynamics, it is of importance to be able to develop frequency domain analysis under weaker conditions and to investigate under what conditions such an analysis is possible. In this note, we aim to provide the main building blocks for this relaxation and establish functional versions of the two fundamental results that lie at the core of frequency domain analysis for classical stationary time series: Herglotz's Theorem and the Cram{\'e}r Representation Theorem. It is worth remarking that we establish these two results under necessary conditions.  These results allow in particular to develop optimal finite dimension reduction techniques for such highly relevant applications, which are currently not available.  

The structure of this note is as follows. In section \ref{sec2}, we start by introducing the necessary notation and terminology. In section \ref{Herglotz}, we establish the existence of a functional Herglotz's theorem. For this, we make precise the concept of an operator-valued measure and the notion of operator-valued kernel functions. In section \ref{Gcram}, Herglotz's theorem is used to prove a generalized functional Cram{\'e}r representation for a large class of weakly stationary Hilbert-valued time series, including those with discontinuities in the spectral measure and long-memory processes. Finally, 
a Karhunen-Lo{\`e}ve expansion on the frequency components in the Cram{\'e}r representation is applied in order to obtain a harmonic principal component analysis of the series.

\section{Notation and preliminaries}\label{sec2}

\subsection{The function space}

We first introduce some necessary notation. Let $(T,\mathcal{B})$ be a measurable space with $\sigma$-finite measure $\mu$. Furthermore, let $E$ be a Banach space with norm $\norm{\cdot}_E$ and equipped with the Borel $\sigma$-algebra. We then define $L^p_E(T,\mu)$ as the Banach space of all strongly measurable functions $f:T\to E$ with finite norm
\[
\norm{f}_{L^p_E(T,\mu)}=\Big(\int \norm{f(\tau)}^p_E\,d\mu(\tau)\Big)^{\tfrac{1}{p}}
\]
for $1\leq p<\infty$ and with finite norm
\[
\norm{f}_{L^\infty_E(T,\mu)}=\inf_{\mu(N)=0}\sup_{\tau\in T\without N}\norm{f(\tau)}_E
\]
for $p=\infty$. We note that two functions $f$ and $g$ are equal in $L^p$, denoted as $f \overset{L^p}{=} g$, if $\norm{f-g}_{L^p_E(T,\mu)}=0$. If $E$ is a Hilbert space with inner product $\innerprod{\cdot}{\cdot}_E$ then $L^2_E(T,\mu)$ is also a Hilbert space with inner product
\[
\innerprod{f}{g}_{L^2_E(T,\mu)}=\int\innerprod{f(\tau)}{g(\tau)}_E\,d\mu(\tau).
\]
\textcolor{black}{For elements $f$ and $g$ of a Hilbert space $H$, we denote the inner product by $\innerprod{f}{g}$ and the induced norm by $\|f\|$.}
 
We shall extensively make use of linear operators on a Hilbert space $H$. A linear operator on a Hilbert space $H$ is a function $A:H\to H$ that preserves the operations of scalar multiplication and addition. We shall denote the class of bounded linear operators by $\Bol$ and its norm by $\snorm{\cdot}_{\lo}$. Furthermore, the class of trace class operators and Hilbert-Schmidt operators will be denoted by $S_1(H)$ and $S_2(H)$, respectively and their norms by $\snorm{\cdot}_{1}$ and $\snorm{\cdot}_{2}$. Equipped with these norms $(\lo(H), \snorm{\cdot}_\lo)$ and $(S_1(H), \snorm{\cdot}_1)$ form Banach spaces while $(S_2(H), \snorm{\cdot}_2)$ forms a Hilbert space with inner product $\innerprod{\cdot}{\cdot}_{S_2}$.   An operator $A \in \Bol$ is called self-adjoint if $\innerprod{Af}{g} = \innerprod{f}{A g}$ for all $f,g \in H$, while we say it is non-negative definite if $\innerprod{Ag}{g} \ge 0$ for all $g \in H$. It will be convenient to denote the respective operator subspaces of self-adjoint and non-negative operators with ${(\cdot)}^\dagger$ and ${(\cdot)}^+$, respectively. It is straightforward to verify that  $\lo(H)^+\subseteq \lo(H)^\dagger$ and $S_p(H)^+\subseteq S_p(H)^\dagger$. \textcolor{black}{Finally, we denote $O_H$ for the zero operator on $H$ and denote the topological dual space of a Banach space $B$ by $(B)^\prime$ and similar for appropriate subspaces thereof.}

\subsection{Functional time series}\label{FTS}

We define a functional time series $X=\{X_t \colon t \in \znum\}$ as a sequence of random elements on a probability space $(\Omega,\mathcal{A},\prob)$ taking values in a separable Hilbert space $H$ \textcolor{black}{such as, for instance, the space $L^2([0,1])$ of all square integrable functions on the interval $[0,1]$. Throughout this text, we consider functional time series that are  weakly stationary in the usual sense, that is, the first and second moments exist and are invariant under translation in time. More precisely, $X$ is weakly stationary if $\mean\norm{X_t}^2<\infty$ for all $t\in\znum$, $X$ has constant mean functions $\mean(X_t)=m$ for all $t\in\znum$, and the second moment tensors of $X$ satisfy $\mean(X_t\otimes X_s)=\mean(X_{t-s}\otimes X_0)$ for all $t,s\in\znum$. We note that in this case the random elements $X_t$ belong to the Hilbert space $\mathbb{H}=L^2_{H}(\Omega,\prob)$ of all $H$-valued random variables $X$ with $\mean\norm{X}^2<\infty$.
The inner product on this space $\mathbb{H}$ is denoted by $\innerprod{\cdot}{\cdot}_{\mathbb{H}}= \E\innerprod{\cdot}{\cdot}$ and the induced norm is denoted by $\|\cdot\|_{\mathbb{H}}$.}
\textcolor{black}{Without loss of generality, we assume that the mean function $m$ is zero. In this case, the $h$-th lag covariance operators $\mathcal{C}_{h}$ can be defined by
\[\mathcal{C}_h = \E (X_h \otimes X_0).\]
The condition $\mean\|X_0\|^2<\infty$ ensures that the covariance operators $\mathcal{C}_h$ for $h \in \znum$ belong to $S_1(H)$. We elaborate on this in Section \ref{Gcram}. 
%
}

\textcolor{black}{The available literature on frequency domain analysis for weakly stationary functional time series has focused on so-called `short-memory' processes, that is, processes of which the dependence structure decays at a sufficiently fast rate, namely }
\begin{align} \label{decayCh}
\sum_{h \in \znum} \snorm{\mathcal{C}_h}_1 < \infty.
\end{align}
Under this condition, it is straightforward to show that the autocovariance operator $\mathcal{C}_h$ forms a Fourier pair with the spectral density operator given by
\[
\F_{\omega} =\frac{1}{2\pi} \sum_{h \in \znum} \mathcal{C}_h e^{-\im h \omega}, 
\] 
where the convergence holds in $\snorm{\cdot}_1$ and the spectral density operator acts on $H$. Under condition \eqref{decayCh},  a classical Ces{\`a}ro-sum argument \citep[see e.g.][]{b81} was used by \citet{PanarTav2013a} to derive that $\F_{\omega}$ is non-negative definite and hence belongs to $S_{1}(H)^+$. As already mentioned in the introduction, the autocovariance operator at lag $h$ itself can then be represented as
\begin{align} \label{covInvFT}
\mathcal{C}_h = \int_{-\pi}^{\pi} e^{\im h \omega} \mathcal{F}_{\omega} d\omega,
\end{align}
where the convergence holds in $\snorm{\cdot}_1$. \citet{Panar2013b} showed that a zero mean \textcolor{black}{weakly} stationary functional time series $X$ satisfying condition \eqref{decayCh} admits a functional spectral representation of the form
\begin{align}\label{eq:cramerwkp}
X_t = \int_{-\pi}^{\pi} e^{\im\omega t}\,dZ_{\omega} \qquad \text{a.s.}, 
\end{align}
where $Z_{\omega}$ is a functional orthogonal increment process such that, for fixed $\omega$, $Z_{\omega}$ is a random element in $H$ with $\E \| Z_{\omega}\|_2^2 = \int_{-\pi}^{\omega}\snorm{ \mathcal{F}_{\lambda}}_1 d \lambda$. \textcolor{black}{If the summability conditions as in \eqref{decayCh} do not hold, the spectral density operators might not necessarily exist as elements of $S^{+}_1(H)$ and, as a consequence, \eqref{covInvFT} and \eqref{eq:cramerwkp} might no longer hold true. Tavakoli (2014) studied the frequency domain representations for processes that violate the summability condition \eqref{decayCh} but possess a weak spectral density operator $\mathcal{F}_\omega$ implicitly defined as an element of $L^p([-\pi,\pi],S_1(H))$ with $1< p\leq\infty$ satisfying \eqref{covInvFT}. In the next section, we provide a frequency domain representation for general non-negative definite autocovariance operator functions $\{\mathcal{C}_h\}$. In particular, we do not require any assumptions on the rate of decay or the existence of a (weak) spectral density operator. In section 4, we then use this representation to derive a functional Cram\'{e}r representation for general weakly stationary functional time series, which can be seen as a true generalization of the classical Cram\'{e}r representation theorem to functional time series.}

\section{Herglotz's Theorem on a function space}\label{Herglotz}

In this section we derive a functional generalization of the classical Herglotz's theorem. Note that it is intuitive from \eqref{covInvFT} that if we do not have a spectral density operator, then the measure itself must be operator-valued for the equation to be balanced. This raises the following questions: firstly, does an operator of the form
\begin{align}
\int_{-\pi}^{\pi} e^{\im h \omega}\,d\Fm(\omega), \qquad h \in \znum,
\end{align}
where $\Fm$ is an operator-valued measure on $[-\pi,\pi]$, exist? Secondly, what properties must an operator possess to be represented by such an integral?  From the classical Herglotz's theorem, we know that the non-negative definite complex-valued function on the integers are precisely those that can be identified to have a frequency domain representation with respect to a finite Radon measure.

\begin{theorem}[Herglotz's theorem] \label{thm:Herglotzscalar}
A function $\gamma(\cdot): \znum \to \cnum$ is non-negative definite if and only if
\[
\gamma(h) =\int_{\pi}^{\pi} e^{\im h \omega}\,dF(\omega), \qquad h  \in \znum
\]
where $F(\cdot)$ is a right-continuous, non-decreasing bounded function on $[-\pi,\pi]$ with $F(-\pi) = 0$. 
\end{theorem}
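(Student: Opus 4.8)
The plan is to prove the two implications separately, dispatching sufficiency quickly and concentrating effort on necessity. For the \emph{if} direction, suppose $\gamma(h)=\int_{-\pi}^{\pi} e^{\im h\omega}\,dF(\omega)$ with $F$ non-decreasing, so that $dF$ is a non-negative measure. Given any finite set $t_1,\ldots,t_n\in\znum$ and scalars $a_1,\ldots,a_n\in\cnum$, substituting the representation for $\gamma(t_j-t_k)$ and interchanging the finite sum with the integral yields
\[
\sum_{j,k=1}^{n} a_j \overline{a_k}\,\gamma(t_j-t_k)
= \int_{-\pi}^{\pi} \Big| \sum_{j=1}^{n} a_j e^{\im t_j \omega} \Big|^2 dF(\omega) \ge 0,
\]
which is exactly non-negative definiteness; this direction needs nothing beyond positivity of $dF$.

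For the \emph{only if} direction, the strategy is a Fej\'er/Ces\`aro averaging argument followed by a weak compactness limit. For each $N\ge 1$ I would form the trigonometric polynomial
\[
f_N(\omega) = \frac{1}{2\pi} \sum_{|h| < N} \Big(1 - \frac{|h|}{N}\Big)\gamma(h)\, e^{-\im h\omega}.
\]
The crucial observation is that $f_N\ge 0$ pointwise: regrouping the double sum with indices $t_j=j$, $0\le j\le N-1$, and weights $a_j=e^{-\im j\omega}$ gives $2\pi N f_N(\omega)=\sum_{j,k=0}^{N-1} a_j\overline{a_k}\,\gamma(j-k)$, since exactly $N-|h|$ index pairs satisfy $j-k=h$. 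This quantity is non-negative precisely by the hypothesis on $\gamma$. This is the step where the assumption is actually consumed, and I expect it to be the conceptual heart of the argument.

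I would then define the absolutely continuous measures $dF_N(\omega)=f_N(\omega)\,d\omega$ on $[-\pi,\pi]$. Orthogonality of the exponentials over $[-\pi,\pi]$ gives total mass $\int_{-\pi}^{\pi} dF_N = \gamma(0)$ for every $N$, so the family $\{F_N\}$ is uniformly bounded, while the $h$-th Fourier coefficient evaluates to $\int_{-\pi}^{\pi} e^{\im h\omega}\,dF_N(\omega) = (1-|h|/N)\,\gamma(h)$, which converges to $\gamma(h)$ for each fixed $h$ as $N\to\infty$.

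Finally, since the distribution functions $F_N$ are non-decreasing and uniformly bounded on the compact interval $[-\pi,\pi]$, Helly's selection theorem furnishes a subsequence converging at every continuity point to a bounded non-decreasing limit $F$. Because $e^{\im h\omega}$ is continuous and bounded and no mass can escape from the compact interval, I can pass to the limit under the integral and obtain $\gamma(h)=\int_{-\pi}^{\pi} e^{\im h\omega}\,dF(\omega)$. It remains to normalise $F$ to be right-continuous with $F(-\pi)=0$, redefining it at its at most countably many jump points and subtracting the constant $F(-\pi)$; neither adjustment changes the integrals, so the representation persists. The main obstacle is this compactness step: one must ensure the limiting measure is genuinely supported on $[-\pi,\pi]$ and that the endpoints contribute no spurious mass, which is what legitimises interchanging the limit with the integral.
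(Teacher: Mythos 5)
The paper itself states Theorem~\ref{thm:Herglotzscalar} without proof --- it is the classical scalar Herglotz theorem, quoted as the ingredient for the functional version (Theorem~\ref{Herglotzthm}) --- so your attempt can only be measured against the standard textbook argument, which is exactly the route you follow: positivity of the Fej\'er/Ces\`aro averages $f_N$ (the step where non-negative definiteness is consumed), constant total mass $\gamma(0)$, Fourier coefficients $(1-|h|/N)\gamma(h)$, Helly selection, and a limit passage on the compact interval. The sufficiency direction and all of these steps are sound.

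The gap is in your final normalization, and it is a genuine one. You claim that restoring right-continuity at the jump points and subtracting the constant $F(-\pi)$ ``changes no integrals''; this fails precisely when the Ces\`aro approximants send positive mass to the left endpoint. Concretely, take $\gamma(h)=(-1)^h$, which is non-negative definite with all spectral mass at frequency $\pi$. Then $f_N$ is the Fej\'er kernel centred at $\pi$, and on $[-\pi,\pi]$ its mass splits in the limit: $1/2$ accumulates immediately to the right of $-\pi$ and $1/2$ at $\pi$, so the Helly limit is $F(-\pi)=0$, $F\equiv 1/2$ on $(-\pi,\pi)$, $F(\pi)=1$. The representation
\[
\gamma(h)=\int_{-\pi}^{\pi}e^{\im h\omega}\,dF(\omega)
\]
does hold for this $F$, but only because the Riemann--Stieltjes integral over the closed interval picks up the jump of size $1/2$ located \emph{at} $-\pi$. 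Your normalization deletes that jump instead of relocating it: after redefining $F(-\pi)$ to be right-continuous and subtracting the constant, the resulting $\tilde F$ carries a single atom of mass $1/2$ at $\pi$, and
\[
\int_{-\pi}^{\pi}e^{\im h\omega}\,d\tilde F(\omega)=\tfrac12(-1)^h\neq\gamma(h).
\]
The correct repair uses that $h$ ranges over the integers, so $e^{\im h(-\pi)}=e^{\im h\pi}$: any mass that the limiting measure assigns to the point $-\pi$ may be transferred to $+\pi$ without altering any of the integrals $\int e^{\im h\omega}\,dF$. Doing this first, and only then setting $F(\omega)=\mu\big((-\pi,\omega]\big)$, produces a right-continuous, non-decreasing $F$ with $F(-\pi)=0$ for which the representation holds. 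This periodicity step is not cosmetic: without it, the normalized distribution function demanded by the theorem cannot be recovered from the Helly limit for covariances such as $(-1)^h$.
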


Here, the so-called spectral distribution function $F(\cdot)$ with $F(-\pi)=0$ is uniquely determined by the covariance function $\gamma(h)$, $h \in \znum$. To extend this result to our functional setting, we require the notion of non-negative definiteness of operator-valued functions on $\znum$ as well as definitions of operator-valued measures and of integrals with respect to such measures. For the former, we proceed as in the scalar-valued case with the help of non-negative operator-valued kernels.

\begin{definition} \label{pdkernel}\mbox{}
\begin{romanlist}
\item
A function $c: \znum \times \znum \to \Bol$ is called a non-negative definite $\Bol$-valued kernel if 
\[
\sum_{i,j=1}^{n} \innerprod{c(i,j)\,g_j}{g_i} \ge 0
\]
for all $g_1,\ldots,g_n \in H$, and $n \in \nnum$. 
\item
A function $\mathcal{C}:\mathcal\znum\to\Bol$ is called non-negative definite if the kernel $c: \mathcal \znum\times\znum\to \Bol$ defined by $c(i,j) = \mathcal{C}(i-j)$ is a non-negative definite kernel.
\end{romanlist}
\end{definition}

Non-negative definite operator-valued kernels are an extremely powerful tool in functional analysis, especially in operator theory and representation theory. A generalization of this concept to general involutive semigroups other than $\znum$ can be found in  \citet{Neebbook}. For functional time series, Definition \ref{pdkernel} provides a link between the properties of the covariance kernel and the corresponding covariance operator viewed as a function on the integers. More specifically, we have the following result.

\begin{proposition} \label{prop:nonnegCh}
Let $\{X_t \colon t\in \znum\}$ be a zero-mean weakly stationary functional time series with covariance operators $\mathcal{C}_{h}$, $h \in \znum$. Then $\mathcal{C}_{(\cdot)}: \znum \to \Bol$ is a non-negative definite function. 
\end{proposition}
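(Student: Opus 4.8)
The plan is to verify directly the defining inequality of Definition \ref{pdkernel}(i) for the kernel $c(i,j) = \mathcal{C}_{i-j}$ associated with $\mathcal{C}_{(\cdot)}$. Fix $n \in \nnum$ and arbitrary $g_1, \ldots, g_n \in H$. Recalling that the lag-$h$ covariance operator satisfies
\[
\innerprod{\mathcal{C}_h g_1}{g_2} = \E\big[\innerprod{g_1}{X_0}\,\innerprod{X_h}{g_2}\big], \qquad g_1, g_2 \in H,
\]
the goal reduces to showing that $\sum_{i,j=1}^n \innerprod{\mathcal{C}_{i-j} g_j}{g_i} \ge 0$.

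First I would exploit weak stationarity. Because the second-order moments are invariant under translation in time, the defining identity continues to hold when the time origin $0$ is shifted to any $t \in \znum$; that is, $\innerprod{\mathcal{C}_h g_1}{g_2} = \E[\innerprod{g_1}{X_t}\,\innerprod{X_{t+h}}{g_2}]$ for every $t$. Choosing $t = j$ and $h = i-j$ yields the symmetric representation
\[
\innerprod{\mathcal{C}_{i-j} g_j}{g_i} = \E\big[\innerprod{g_j}{X_j}\,\innerprod{X_i}{g_i}\big].
\]
Since the sum over $i,j$ is finite and each summand is integrable (by $X_t \in \Hspace$ together with the Cauchy--Schwarz inequality, which guarantees $\E|\innerprod{X_i}{g_i}|^2 < \infty$), linearity of the expectation lets me interchange summation and expectation to obtain
\[
\sum_{i,j=1}^n \innerprod{\mathcal{C}_{i-j} g_j}{g_i} = \E\Big[\Big(\sum_{j=1}^n \innerprod{g_j}{X_j}\Big)\Big(\sum_{i=1}^n \innerprod{X_i}{g_i}\Big)\Big].
\]

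The final step is to recognise the conjugate structure. Writing $\xi_i := \innerprod{X_i}{g_i}$, the Hermitian symmetry of the inner product gives $\innerprod{g_j}{X_j} = \overline{\xi_j}$, so the two bracketed sums are complex conjugates of one another. Hence the right-hand side equals $\E\big[\big|\sum_{i=1}^n \xi_i\big|^2\big] \ge 0$, which is exactly the desired inequality. As $n$ and the $g_i$ were arbitrary, $c(i,j) = \mathcal{C}_{i-j}$ is a non-negative definite $\Bol$-valued kernel, and therefore $\mathcal{C}_{(\cdot)}$ is non-negative definite in the sense of Definition \ref{pdkernel}(ii).

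I expect the only genuine subtlety to be the correct bookkeeping of the time shift: the covariance operators are indexed by a single lag $h = i-j$, yet to assemble the double sum into a single square one must re-express $\mathcal{C}_{i-j}$ through the pair $(X_i,X_j)$ rather than the canonical pair $(X_{i-j},X_0)$, which is precisely where weak stationarity enters. Once this re-centring is combined with the conjugation identity $\innerprod{g_j}{X_j} = \overline{\innerprod{X_j}{g_j}}$, the non-negativity is immediate, and integrability is a routine consequence of the standing assumption $X_t \in \Hspace$.
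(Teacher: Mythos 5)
Your proof is correct and follows essentially the same route as the paper's: both use weak stationarity to rewrite $\mathcal{C}_{i-j}$ as $\E(X_i \otimes X_j)$ and then collapse the double sum into the single expectation $\E\big|\sum_{i=1}^n \innerprod{X_i}{g_i}\big|^2 \ge 0$. The only difference is that you spell out the interchange of expectation and finite summation and the conjugation identity, which the paper leaves implicit.
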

\begin{proof}
The operator-valued kernel $c(i,j) =\mathcal{C}_{i-j}$, $i,j \in \znum$ satisfies 
\[
\sum_{i,j=1}^{n} \innerprod{\mathcal{C}_{(i-j)}g_j}{g_i} = \sum_{i,j=1}^{n} \innerprod{\E (X_{i} \otimes X_{j})g_j}{g_i} =\E \bignorm{\sum_{i=1}^{n} \innerprod{X_{i}}{ g_i}}^2 \ge 0
\]
and is therefore non-negative definite by Definition \ref{pdkernel}.
\end{proof}


\subsection{\textcolor{black}{$\lo(H)$-valued measures}}

In this subsection, we explain how $\lo(H)$-valued measures can be defined. Let us first provide some intuition by making an analogy with positive scalar-valued measures. Recall that a positive measure $\mu$ on a measurable space $(T,\mathcal{B})$ is defined as a countably additive map on $\mathcal{B}$ taking values in the compactification $[0,\infty]$ of the set $\rnum^+:=[0,\infty)$. The compactification is necessary for $\sigma$-additivity to hold (although for finite measures the compact subset $[0,\mu(T)]$ is sufficient). We note that $\rnum^+$ is an example of a pointed convex cone, that is, it is a convex nonempty subset of $\rnum$ that is closed under non-negative scalar multiplication and contains the zero element. It is moreover dense in $\rnum^+_{\infty}: =[0,\infty]$. Taking this view, a positive measure more generally can be defined as a countably additive map taking values in the compactification of a pointed convex cone. For the purpose of this paper, we are solely interested in measures taking values in the  compactification of  $\lo(H)^+$, the pointed convex cone of $\lo(H)$ consisting of all non-negative elements $\lo(H)$. \textcolor{black}{We now explain heuristically how such a measure can be defined. The technical argument and more details on properties of cones are relegated to Appendix \ref{sec:proofssec3}.} For the general theory on cone-valued measures we refer to \citet{Neebbook} and \citet{Glockner2003}.\\

\textcolor{black}{Essential in the construction of the measure is the following duality between the underlying real Banach space of self-adjoint elements $\lo(H)^\dagger$ and the real Banach space of self-adjoint trace class operators, $S_{1}(H)^{\dagger}$. More specifically, by the duality pairing
\[
\innerprod{\cdot}{\cdot} : \Bol^\dagger \times S_{1}(H)^\dagger \to \rnum, \qquad\innerprod{B}{A} = \tr(B A)\tageq \label{eq:pairapp}
\]
and defining $\phi_B:S_{1}(H)^\dagger \to \rnum$ with $\phi_B(A)=\innerprod{B}{A}=\tr(B A)$, we can identify $(\lo(H)^\dagger, \opnorm{\cdot})$ with the topological dual space of $(S_{1}(H)^{\dagger}, \snorm{\cdot}_{1})$ through the isometric isomorphism
\[
\phi:\lo^\dagger(H) \to (S_1(H)^\dagger)^\prime,\quad B\mapsto \phi_B, 
\]
that is, we have $\lo(H)^\dagger \cong (S_1(H)^\dagger)^\prime$. This isomorphism provides us with a natural notion of convergence for a sequence of operators in $\lo(H)^\dagger$, known as the ultraweak topology. 
\begin{definition}\label{def:uw}
\textit{A sequence of operators $\{B_n\} \in \lo(H)^\dagger$ converges in the ultraweak topology to ${B_0}$ if
$\phi_{B_n}(A) \to \phi_{B_0}(A)$ for all $A \in S_1(H)^\dagger$ as $n\to\infty$.}
\end{definition}
Thus, the ultraweak topology is the coarsest topology such that the pointwise evaluations $\phi_B(A)$ for all $A\in S_1(H)^\dagger$ are continuous as functions in $B$.\\
The isomorphism $\phi$ when restricted to the cone $\lo(H)^+$ of non-negative definite bounded operators suggests a similar result between $\lo(H)^+$ and $S_1(H)^+$, the non-negative definite trace class operators, namely
$\lo(H)^+ \cong (S_1(H)^+)^\prime$. However, a slight problem arises from the fact that $\lo(H)^+$, $S_1^+(H)$, and $\rnum^+$ are not vector spaces in the strict sense because they are not closed under negative scalar multiplication.  Nevertheless, they can be regarded as topological monoids with respect to addition as they are closed under addition, have a zero element, and the addition is continuous (see Appendix \ref{ap:backgr}). Similarly, the restricted mappings $\phi_B:S_1(H)^+\to\rnum^+$ for $B\in\lo(H)^+$ now become monoid homomorphisms, that is, they preserve addition and the zero element. Moreover, it can be shown that the mapping $\phi$ in this case provides an isomorphism (Appendix \ref{sec:apem}) between the monoid $\lo(H)^+$ and the monoid $\operatorname{Hom}_{mon}(S_1(H)^+,\rnum^+)$, which consists of \textit{all} monoid homomorphisms from $S_1(H)^+$ to $\rnum^+$. That is, we have
\[
\phi(\lo(H)^+)=\operatorname{Hom}_{mon}(S_1(H)^+,\rnum^+). \tageq \label{eq:hommon}
\]
Since the isometry property of $\phi$ is preserved when restricted to the cone $\lo(H)^+$ we can obtain (Theorem \ref{prop:fact2}) an isometric isomorphism 
\[\phi(\lo(H)^+)= \operatorname{Hom}_{mon}(S_1(H)^+,\rnum^+) \cong_{m} \lo(H)^+,\] where the subscript $m$ in $\cong_m$ emphasizes that the isomorphism is between monoids. Since the set \eqref{eq:hommon} is dense in the compact set $\text{Hom}_{mon}(S_{1}(H)^+, \rnum^+_{\infty})$ \citep{Glockner2003}, the compactification of $\lo(H)^+$ naturally can be defined by
\[ \tageq \label{eq:comp}
\Bcom= \text{Hom}_{mon}(S_1(H)^+, \rnum^+_{\infty}).
\]
The set $\text{Hom}_{mon}(S_1(H)^+, \rnum^+_{\infty})$ inherits the notion of convergence as given in Definition \ref{def:uw}, ensuring that addition is continuous with respect to this notion. The argument is technical and can be found in Appendix \ref{sec:proofssec3}. With this, $\lo(H)^+$-valued measures can now be defined as follows.
\begin{definition} \label{def:meas}
\textit{Let $(T,\mathcal{B})$ be a measurable space.  A mapping $\mu: \mathcal{B} \to \Bcom$ is an $\lo(H)^+$-valued measure on $(T,\mathcal{B})$ if it is countably additive and $\mu(\emptyset)=O_H$.  The measure $\mu$ is called finite if $\mu(T) \in \text{Hom}_{mon}(S_1(H)^+, \rnum^+) \cong_m \lo(H)^+$. It is called $\sigma$-finite if $E$ is the countable union of measurable sets with finite measure, that is, if $E = \bigcup_{i=1}^{\infty} E_i$  and where $\mu(E_i) \in \lo(H)^+$ for all $i \in \nnum$.}
\end{definition}
Observe that by \eqref{eq:comp}, countably additivity in $\Bcom$ holds with respect to the ultraweak topology, that is, for any sequence $\{E_i\}_{i=1}^\infty$ of pairwise disjoint sets in $\mathcal{B}$, we have
\[
\mu(\bigcup_{i=1}^{\infty} E_i)(A) = \sum_{i=1}^{\infty} \mu(E_i)(A) 
\]
for all $A\in S_1(H)^+$, where we note that $\mu(E_i)(A) =\tr(\mu(E_i)A)$. This implies also that a positive $\lo(H)^+$-valued measure $\mu$ can be represented by a family of positive scalar-valued measures $\{\mu_A\}_{A \in S_1(H)^+}$ given by the evaluation functions
\[
\mu_A(E) = \mu(E)(A) 
\]
for all $E\in \mathcal{B}$. Conversely, a family of positive scalar-valued measures $\{\mu_A\}_{A \in S_1(H)^+}$ defines a positive $\lo(H)^+$-valued measure $\mu$ if and only if for all $E\in \mathcal{B}$ the mapping $A\mapsto\mu_A(E)$ is a monoid homomorphism. We summarize this important relation between the operator-valued measure and the corresponding family of scalar-valued measures in the following theorem, which is a simplification of Theorem I.10 of \citet{Neeb98}.}
\begin{theorem} \label{thm:Cons_m}
Let $\{\mu_A\}_{A \in S_1(H)^+}$ be a family of non-negative measures on the measurable space $(T, \mathcal{B})$ s.t. for each borel set $E \subseteq V$ the assignment $A \mapsto \mu_A(E)$ is a monoid homomorphism. Then there exists for each $E \in \mathcal{B}$ a unique element $\mu(E) \in \Bcom$ with $\mu(E)(A) = \mu_A(E)$ for all $A \in  S_1(H)^+$ and the function $\mu : \mathcal{B} \to \Bcom$ is a $\lo(H)^+$-valued measure.
\end{theorem}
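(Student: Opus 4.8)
The plan is to exploit the fact that, by the construction of the compactification in \eqref{eq:comp}, the space $\Bcom$ is \emph{exactly} the set $\text{Hom}_{mon}(S_1(H)^+, \rnum^+_{\infty})$ of all monoid homomorphisms into the compact additive monoid $\rnum^+_{\infty}$, carrying the topology of pointwise convergence on $S_1(H)^+$. Consequently the theorem is largely a matter of unwinding these definitions, the one genuinely structural point being that countable additivity valued in the monoid $\Bcom$ reduces to pointwise countable additivity.

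First I would fix $E \in \mathscr{B}$ and define $\mu(E)$ to be the map $A \mapsto \mu_A(E)$. By hypothesis this assignment is a monoid homomorphism, and since each $\mu_A$ is a non-negative measure it takes values in $[0,\infty] = \rnum^+_{\infty}$; hence it is an element of $\text{Hom}_{mon}(S_1(H)^+, \rnum^+_{\infty}) = \Bcom$ by \eqref{eq:comp}, and it satisfies $\mu(E)(A) = \mu_A(E)$ for all $A \in S_1(H)^+$ by construction. Uniqueness is then immediate: since elements of $\Bcom$ are functions on $S_1(H)^+$, any $\nu \in \Bcom$ with $\nu(A) = \mu_A(E)$ for every $A$ coincides pointwise with $\mu(E)$ and is therefore equal to it.

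Next I would verify the two defining properties of a $\lo(H)^+$-valued measure. For $\mu(\emptyset) = O_H$, note that each $\mu_A$ is a measure, so $\mu_A(\emptyset) = 0$; hence $\mu(\emptyset)(A) = 0$ for every $A \in S_1(H)^+$, which is precisely the functional associated with the zero operator $O_H$ under the embedding \eqref{eq:topbed}. For countable additivity, let $\{E_i\}_{i \in \nnum}$ be a pairwise disjoint family in $\mathscr{B}$. Because the monoid operation on $\Bcom$ is pointwise addition and its topology is that of pointwise convergence on $S_1(H)^+$, the partial sums $\sum_{i=1}^N \mu(E_i)$ are again elements of $\Bcom$, and I would evaluate them at an arbitrary $A \in S_1(H)^+$:
\[
\sum_{i=1}^N \mu(E_i)(A) = \sum_{i=1}^N \mu_A(E_i) \longrightarrow \mu_A\Big(\bigcup_{i=1}^\infty E_i\Big) = \mu\Big(\bigcup_{i=1}^\infty E_i\Big)(A) \quad (N \to \infty),
\]
where the convergence is the countable additivity of the scalar measure $\mu_A$. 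Since this holds for every $A$, the partial sums converge pointwise, i.e.\ in the topology of $\Bcom$, to $\mu(\bigcup_i E_i)$, which is exactly countable additivity in $\Bcom$.

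The hard part is not in this verification but has already been absorbed into the definition of the compactification: the crucial inputs are that $\Bcom$ is the \emph{full} set of monoid homomorphisms (guaranteed by \eqref{eq:homreq}) and that it carries the topology of pointwise convergence. Given these, the only place where one must be slightly careful is in matching the abstract notion of countable additivity in the target monoid to the pointwise statement above; this is legitimate precisely because both addition and convergence in $\Bcom$ are pointwise on $S_1(H)^+$, so no further continuity argument is needed beyond $\sigma$-additivity of each $\mu_A$.
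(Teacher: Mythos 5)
Your proposal is correct and takes essentially the same approach as the paper: both identify $\mu(E)$ with the monoid homomorphism $A \mapsto \mu_A(E)$, which lies in $\Bcom$ by the very definition \eqref{eq:comp}, and both reduce countable additivity in $\Bcom$ to the $\sigma$-additivity of the scalar measures $\mu_A$ via the pointwise-convergence topology on $\Bcom$. Your explicit verification that $\mu(\emptyset)=O_H$ is the only (trivial) step the paper leaves implicit.
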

In particular, it follows that a $\Bcom$-valued Radon measure $\mu$ on a locally compact space $T$ is a measure with the property that, for each non-negative trace class operator $A \in S_1(H)^+$, the measure $\mu_A: \mathcal{B} \to \rnum^+_{\infty}$ is a finite non-negative Radon measure on $T$. Moreover, integrability of a function with respect to the scalar measure furthermore implies integrability of the function with respect to the $\lo(H)^+$-valued measure. 

\subsection{Functional Herglotz's theorem}

We are now ready to prove the following generalization of Theorem \ref{thm:Herglotzscalar}. 
\begin{theorem}[Functional Herglotz's Theorem]\label{Herglotzthm}

A function $\Gamma:\znum \to \Bol$ is non-negative definite if and only if
\begin{align} \label{eq:Herglotzint}
\Gamma(h) = \int_{-\pi}^{\pi} e^{\im h \omega} d\Fm(\omega) \qquad h \in \znum,
\end{align}
\textcolor{black}{where $\Fm$ is a finite $\Bcom$-valued measure on $[-\pi,\pi]$ with $\Fm(-\pi)=O_H$. The finite measure $\Fm$ is uniquely determined by $\Gamma(h), h \in \znum$.}
\end{theorem}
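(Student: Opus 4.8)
The plan is to prove both implications by transferring everything to the classical scalar Herglotz theorem (Theorem~\ref{thm:Herglotzscalar}) through the trace pairing \eqref{eq:pair}, and then reassembling the resulting family of scalar spectral measures into a single $\Bcom$-valued measure by means of Theorem~\ref{thm:Cons_m}. The bulk of the work lies in the ``only if'' direction, the converse being a positivity verification.

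For the ``only if'' direction, fix $A \in S_1(H)^+$ and set $\gamma_A(h) := \Tr(\Gamma(h) A)$, which is well defined since $\Gamma(h) \in \Bol$ and $A \in S_1(H)$. I would first show that $\gamma_A$ is a non-negative definite scalar sequence. Writing the spectral decomposition $A = \sum_k \lambda_k\,(e_k \otimes e_k)$ with $\lambda_k \ge 0$ and $\{e_k\}$ orthonormal, one has $\Tr(\Gamma(h)(e_k\otimes e_k)) = \innerprod{\Gamma(h) e_k}{e_k}$, so for scalars $c_1,\ldots,c_n$
\[
\sum_{i,j=1}^n c_i \bar c_j\, \gamma_A(i-j) = \sum_k \lambda_k \sum_{i,j=1}^n \innerprod{\Gamma(i-j)(\bar c_j e_k)}{\bar c_i e_k} \ge 0,
\]
where each inner double sum is non-negative by Definition~\ref{pdkernel} applied to $g_i = \bar c_i e_k$. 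The classical Herglotz theorem then yields, for each $A$, a unique finite non-negative measure $\mu_A$ on $[-\pi,\pi]$ with $\gamma_A(h) = \int_{-\pi}^\pi e^{\im h\omega}\, d\mu_A(\omega)$. Additivity of $A \mapsto \mu_A$ follows from uniqueness: since $\Tr(\Gamma(h)(A_1+A_2)) = \Tr(\Gamma(h)A_1) + \Tr(\Gamma(h)A_2)$, the measures $\mu_{A_1+A_2}$ and $\mu_{A_1}+\mu_{A_2}$ share all trigonometric moments and hence coincide, while $\mu_{O_H} = 0$; thus $A \mapsto \mu_A(E)$ is a monoid homomorphism for every Borel set $E$. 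Applying Theorem~\ref{thm:Cons_m} to the family $\{\mu_A\}_{A\in S_1(H)^+}$ produces a $\Bcom$-valued measure $\Fm$ with $\Fm(E)(A) = \mu_A(E)$.

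It remains to identify $\int_{-\pi}^\pi e^{\im h\omega}\, d\Fm(\omega)$ with $\Gamma(h)$ and to check the stated regularity. The operator $\int e^{\im h\omega}\, d\Fm(\omega)$ is defined through the pairings $A \mapsto \int e^{\im h\omega}\, d\mu_A(\omega)$, $A \in S_1(H)^+$; the estimate $\bigl|\int_{-\pi}^\pi e^{\im h\omega}\, d\mu_A\bigr| \le \mu_A([-\pi,\pi]) = \Tr(\Gamma(0)A) \le \snorm{\Gamma(0)}_{\lo}\,\snorm{A}_1$ shows this functional is bounded. Taking $A = g\otimes g$ gives $\innerprod{[\int e^{\im h\omega}\, d\Fm]\,g}{g} = \int e^{\im h\omega}\, d\mu_{g\otimes g} = \innerprod{\Gamma(h)g}{g}$ for all $g \in H$, and since we work over $\cnum$ the full, possibly non-self-adjoint, operator is recovered by complex polarization, giving $\Gamma(h) = \int e^{\im h\omega}\, d\Fm(\omega)$. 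Finiteness is immediate from $\Fm([-\pi,\pi])(A) = \Tr(\Gamma(0)A)$, i.e.\ $\Fm([-\pi,\pi]) = \Gamma(0) \in \lo(H)^+$; right-continuity, monotonicity and the normalization $\Fm(-\pi)=O_H$ descend from the corresponding properties of the scalar distribution functions $\omega \mapsto \mu_A((-\pi,\omega])$, where monotonicity uses that each increment $\Fm((\omega_1,\omega_2])$ pairs non-negatively with all of $S_1(H)^+$ and hence lies in its dual cone $\lo(H)^+$ by Proposition~\ref{prop:dualcones}. Uniqueness of $\Fm$ then follows from uniqueness in the scalar Herglotz theorem, which fixes each $\mu_A$, together with the uniqueness clause of Theorem~\ref{thm:Cons_m}.

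For the converse, assume the representation \eqref{eq:Herglotzint}. Writing $u(\omega) = \sum_{j=1}^n e^{-\im j\omega} g_j$ one obtains $\sum_{i,j} \innerprod{\Gamma(i-j)g_j}{g_i} = \int_{-\pi}^\pi \innerprod{d\Fm(\omega)\,u(\omega)}{u(\omega)}$, and since $\innerprod{\Fm(E)v}{v} = \mu_{v\otimes v}(E) \ge 0$ for every $v \in H$ and Borel $E$, a Riemann-sum approximation of the continuous integrand $u(\cdot)$ by step functions exhibits the integral as a limit of non-negative terms, hence non-negative; this is precisely non-negative definiteness in the sense of Definition~\ref{pdkernel}. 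The main obstacle throughout is the scalar-to-operator passage in the forward direction: one must ensure that the separately constructed scalar measures $\mu_A$ genuinely assemble into a single $\Bcom$-valued measure---which is exactly what the homomorphism requirement \eqref{eq:homreq} and Theorem~\ref{thm:Cons_m} guarantee---and that a generally non-self-adjoint $\Gamma(h)$ is faithfully reconstructed from its pairings with the positive cone $S_1(H)^+$ alone, which hinges on complex polarization and the boundedness estimate above.
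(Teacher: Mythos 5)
Your proposal is correct and follows essentially the same route as the paper's proof: reduce to the scalar Herglotz theorem (Theorem \ref{thm:Herglotzscalar}) via the trace pairing with $S_1(H)^+$, reassemble the resulting family of scalar measures into a $\Bcom$-valued measure via Theorem \ref{thm:Cons_m}, and verify non-negative definiteness directly from the integral representation for the converse. If anything, your write-up is tighter where the paper is terse -- your spectral-decomposition argument for the non-negative definiteness of $h \mapsto \Tr(\Gamma(h)A)$, and the explicit recovery of the (possibly non-self-adjoint) $\Gamma(h)$ from its pairings with rank-one positive operators by complex polarization, make precise steps the paper only sketches.
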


\begin{proof}
 \textcolor{black}{Suppose first that $\Gamma(h)$ admits the representation \eqref{eq:Herglotzint} with respect to a finite $\Bcom$-valued measure $\Fm$ on $[-\pi,\pi]$.  Note first that, since $\Fm$ is a finite measure, the integral is well-defined. By (ii) of Definition \ref{pdkernel}, it is sufficient to show that the operator-valued kernel $\gamma(h_1,h_2) : =\Gamma(h_1-h_2)$ is non-negative definite. Using (i) of Definition \ref{pdkernel} and that $\Fm$ \textcolor{black}{takes values in $\lo(H)^+$}, we have for all  $h_1,\ldots, h_n \in \znum$ and $g_1,\ldots,g_n \in H$. 
\begin{align*}
 \sum_{j,k=1}^n \innerprod{\gamma(h_j,h_k)g_k}{g_j} &= \sum_{j,k=1}^n \innerprod{ \int_{[-\pi,\pi]} e^{\im (h_j-h_k) \omega} d\Fm(\omega)g_k}{g_j} 
 \\& = \int_{[-\pi,\pi]}\innerprod{\Fm(d\omega) \lsum_{k=1}^n e^{-\im h_k\omega}g_k} {\lsum_{j=1}^n e^{-\im h_j \omega}g_j} \textcolor{black}{\geq 0}.
\end{align*}
} Conversely, suppose that $\Gamma(\cdot)$ is a $\lo(H)^+$-valued function on $\znum$ and let $A$ be a an element of $S_1(H)^+$. Define the function $\Gamma_A: \znum \to \cnum$ by $\Gamma_A(h) = \Tr(\Gamma(h)A)$. Since the square root of both $\Gamma(h)$ and $A$ are well-defined and the trace satisfies $\Tr(\Gamma(h)A)=\Tr(A\Gamma(h))$, it is direct that $\Gamma_A$ is a non-negative definite function on the integers.  By the classical Herglotz's theorem (Theorem \ref{thm:Herglotzscalar}), we therefore have the representation
\[
\Gamma_A(h) = \int_{-\pi}^{\pi} e^{\im h \omega} d\Fm_A(\omega) \qquad h \in \znum,\tageq \label{eq:HergGamA}
\]
where $\Fm_A(\cdot)$ with $\Fm_A(-\pi)=0$ is a uniquely determined Radon measure on $[-\pi,\pi]$. More specifically, the scalar Herglotz theorem implies that $\Fm_A$ is a right-continuous non-decreasing bounded function on $[-\pi,\pi]$. The measure is finite since $C_A(\znum) = C_A(0) = \Fm_A([-\pi,\pi]) < \infty$. \textcolor{black}{Let then $A_1, A_2 \in S_1(H)^+$ and observe that $A_1+A_2 \in S_1(H)^+$ and thus $\Gamma_{A_1+A_2}(h)$ is again a non-negative definite function on $\mathbb{Z}$. Hence, another application of the classical Herglotz theorem to $\Gamma_{A_1+A_2}(\cdot)$ yields
\[
 \Gamma_{A_1+A_2}(h) = \int_{-\pi}^{\pi} e^{\im h \omega} d\Fm_{A_1+A_2}(\omega) \qquad h \in \znum,
\]
whereas by \eqref{eq:HergGamA} $\Gamma_{A_1}(h)+\Gamma_{A_2}(h)= \int_{-\pi}^{\pi} e^{\im h \omega} d\Fm_{A_1}(\omega) + \int_{-\pi}^{\pi} e^{\im h \omega} d\Fm_{A_2}(\omega)$ and since linearity of the trace yields that $\Gamma_{A_1}(h)+\Gamma_{A_2}(h) = \Gamma_{A_1+A_2}(h)$, we obtain that $\Fm_{A_1}(\cdot)+\Fm_{A_2}(\cdot) = \Fm_{A_1+A_2}(\cdot)$ for any $A_1, A_2 \in S_1(H)^+$. This demonstrates that $A \mapsto \Fm_A(E)$ is a monoid homomorphism $S_1(H)^{+} \to \rnum^+$ with respect to addition for each $E \in \mathcal{B}$. By Theorem \ref{thm:Cons_m}, the family of measures $\{\Fm_A\}_{A \in S_1(H)^{+}}$ therefore uniquely identifies an element $\Fm(E) \in \Bcom$ on the measurable space $([-\pi,\pi],\mathcal{B})$ with $\Fm(E)(A) =\Tr(\Fm(E) A) = \Fm_A(E)$ for all $A \in S_1(H)^+$. Since $\Fm_A([-\pi,\pi]) =\tr(\Fm([-\pi,\pi])A))<\infty,\, \forall A \in S_1(H)^+$, we obtain that $\Fm$ is a finite operator-valued measure on $[-\pi,\pi]$, i.e, $\Fm([-\pi,\pi]) \in \lo(H)^+$. Finally, note from the above that
\[
\tr(\Gamma(h)A) = \tr\Big(\int_{-\pi}^{\pi} e^{\im h \omega} d \Fm(\omega)  A\Big) \quad \forall A \in S_1(H)^+
\]
and 
thus $\Gamma(h) \,-\int_{-\pi}^{\pi} e^{\im h \omega} d \Fm(\omega) =O_H$ for all $h \in \znum$ which proves that $\Fm$ is uniquely determined by $\Gamma(h), h \in \znum$.} \end{proof}
\textcolor{black}{
\begin{remark}[Analogy to the classical Herglotz theorem]
Theorem \ref{Herglotzthm} tells us that $\Fm$ is a finite $\Bcom$-valued measure on $[-\pi,\pi]$, i.e., $\Fm([-\pi,\pi]) \in \lo(H)^+$. To make the analogy to the classical Herglotz theorem,we note that $\Fm( [-\pi,\omega])$ can be seen as a non-decreasing, right-continuous operator-valued function in $\omega \in [-\pi,\pi]$. In particular, we identified the measure with a family of finite scalar-valued measures $\{\Fm_A\}_{A \in S_1(H)^{+}}$ on $[-\pi,\pi]$, which satisfied the conditions of the classical Herglotz theorem. Consequently, $\Fm$ is right-continuous in the sense that $\lim_{\omega \downarrow \omega_o} \Fm([-\pi,\omega]) =\tr(\Fm(-\pi,\omega_o])$ where the convergence holds in the ultraweak topology. Moreover, by definition of it being a $\lo(H)^+$-valued measure on $[-\pi,\pi]$, it is non-decreasing in $[-\pi,\pi]$, i.e.,  for all $\omega_2 \ge \omega_1$, $\omega_1, \omega_2 \in [-\pi,\pi]$ we have $\Fm([\omega_1,\omega_2]) = \Fm(\omega_2)-\Fm(\omega_1) \ge O_H$.
\end{remark}}


\section{A generalized functional Cram{\'e}r representation}\label{Gcram}

The Spectral Representation Theorem \citep[][]{Cramer1942}, often called the {\em Cram{\'e}r re\-presentation}, is as fundamental to frequency domain analysis as Wold's representation is to the time domain. It asserts that every (finite dimensional) zero-mean weakly stationary process can be represented as a superposition of sinusoids with random amplitudes and phases that are uncorrelated. An important ingredient in establishing this classical theorem is the existence of an isometric isomorphism that allows to identify a weakly stationary time series on the integers with an orthogonal increment process on $[-\pi,\pi]$. 
As already mentioned, an initial generalization of the Cram{\'e}r representation to \textcolor{black}{weakly stationary functional time series} was first considered by \citet{Panar2013b}, but is restricted to processes for which the assumption $\sum_{h \in \znum} \snorm{\mathcal{C}_h}_1 < \infty$ holds. In this section, we shall use the established functional Herglotz's theorem (Theorem \ref{Herglotz}) to derive a functional Cram{\'e}r representation that can be seen as a true generalization of the classical theorem to the function space. In addition, we establish \textcolor{black}{a Cram{\'e}r--Karhunen--Lo{\`e}ve representation --a term first coined by \citet{Panar2013b}--,} and a harmonic prinicipal component analysis for a very general class of processes of which the spectral measure can have finitely many discontinuities.

\textcolor{black}{We first show that for a weakly stationary functional time series the full second order structure is given by a sequence of trace class operators.}
\begin{corollary} \label{cor:Ch}
\textcolor{black}{Let $X$ be a weakly stationary functional time series. Then} the sequence of lag covariance operators $\{\mathcal{C}_h\}_{h \in \znum}$ belongs to $S_1(H)$.
\end{corollary}
\begin{proof}
\textcolor{black}{By Jensen's inequality, we have $\snorm{\E(X_h \otimes X_0)}_1 \le \E\snorm{X_h \otimes X_0}_1$.
$X_h \otimes X_0$ is therefore a random element of $S_1(H)$ if $\E\sum_i |\innerprod{(X_h \otimes X_0)e_i}{e_i}|<\infty$. By the Cauchy schwarz inequality and Parseval's identity
\begin{align*}
 \E \sum_i |\innerprod{(X_h \otimes X_0)e_i}{e_i}| & 
\le \E \sqrt{\sum_i |\innerprod{e_i}{X_0}|^2} \sqrt{\sum_i |\innerprod{X_h}{e_i}|^2} 
\\& \le   \|X_h\|_{\mathbb{H}} \|X_0\|_{\mathbb{H}}= \|X_0\|^2_{\mathbb{H}} <\infty,
\end{align*}
where the last inequality follows again from the Cauchy Schwarz inequality and the equality follows from weak stationarity. }
\end{proof}

\textcolor{black}{As in the classical case, the proof of the functional Cram{\'e}r representation is based on showing that the mapping}
\[ X_t \mapsto e^{\im t \cdot } \]
forms an Hilbert space isometric isomorphism between 
$L^2_H(\Omega, \prob)$ and 
$L^2([-\pi,\pi], \mu_{\Fm})$ where we define 
\[\mu_{\Fm}(E) = \snorm{\Fm(E)}_1 \tageq \label{mufm}\] for all Borel sets $E \subseteq [-\pi,\pi]$. Here, $\Fm$ is the operator-valued measure on $[-\pi,\pi]$ induced by the sequence of covariance operators $\{C_h\}_{h \in \znum}$ of $\{X_t \colon t \in \znum \}$. Before we derive the properties of the mapping, we have to verify that this indeed defines a measure. This is the contents of the following lemma. 

\begin{lemma} \label{prop:muFm}
\textcolor{black}{Let $X$ be a weakly stationary functional time series.} Then the function $\mu_{\Fm}$ defined in \eqref{mufm} is a finite scalar-valued measure on $[-\pi,\pi]$.    
\end{lemma}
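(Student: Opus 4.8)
The plan is to use the functional Herglotz's theorem to realise $\mu_{\Fm}$ as the total mass (trace) of the operator-valued measure $\Fm$, and then to recognise this trace as a countable sum of the scalar Herglotz measures $\Fm_A$ that already appear in the construction of $\Fm$. First, by Proposition \ref{prop:nonnegCh} the sequence $\{\mathcal{C}_h\}_{h \in \znum}$ is a non-negative definite function on $\znum$, so Theorem \ref{Herglotzthm} provides a finite $\Bcom$-valued measure $\Fm$ on $[-\pi,\pi]$ with $\mathcal{C}_h = \int_{-\pi}^{\pi} e^{\im h \omega}\,d\Fm(\omega)$; in particular $\Fm([-\pi,\pi]) = \mathcal{C}_0$, which under Assumption \ref{as:Ch} lies in $S_1(H)^+$. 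Since $\Fm$ is countably additive and non-negative, for every Borel set $E$ one has $0 \le \Fm(E) \le \Fm([-\pi,\pi]) = \mathcal{C}_0$ in the operator order; because $0 \le A \le B$ with $B$ trace class forces $A$ trace class (the positive trace-class operators form an order ideal), this shows $\Fm(E) \in S_1(H)^+$ with $\snorm{\Fm(E)}_1 \le \snorm{\mathcal{C}_0}_1$.

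Next I use that on the positive cone the trace norm coincides with the trace: for $A \in S_1(H)^+$ one has $\snorm{A}_1 = \Tr(A) = \sum_n \innerprod{A e_n}{e_n}$ for any orthonormal basis $\{e_n\}$ of $H$. Applying this with $A = \Fm(E)$ gives $\mu_{\Fm}(E) = \sum_n \innerprod{\Fm(E) e_n}{e_n}$. Writing $P_n$ for the orthogonal projection onto $\mathrm{span}\{e_n\}$, a rank-one element of $S_1(H)^+$, each summand equals $\Tr(\Fm(E) P_n) = \Fm_{P_n}(E)$, the scalar Herglotz measure attached to $P_n$ in the construction preceding Theorem \ref{thm:Cons_m}. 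Each $\Fm_{P_n}$ is a finite non-negative Borel measure on $[-\pi,\pi]$, so the identity $\mu_{\Fm}(E) = \sum_n \Fm_{P_n}(E)$ exhibits $\mu_{\Fm}$ as a countable sum of finite non-negative measures (and incidentally shows it is independent of the choice of basis, since the trace is).

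It then remains to verify the three defining properties. Non-negativity and $\mu_{\Fm}(\emptyset) = \snorm{O_H}_1 = 0$ are immediate, and finiteness follows from $\mu_{\Fm}([-\pi,\pi]) = \snorm{\Fm([-\pi,\pi])}_1 = \snorm{\mathcal{C}_0}_1 < \infty$. For countable additivity, given a disjoint Borel partition $E = \bigcup_k E_k$, the representation $\mu_{\Fm}(E) = \sum_n \Fm_{P_n}(E) = \sum_n \sum_k \Fm_{P_n}(E_k)$ together with Tonelli's theorem for the non-negative double series permits interchanging the two sums, yielding $\sum_k \sum_n \Fm_{P_n}(E_k) = \sum_k \mu_{\Fm}(E_k)$.

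The main obstacle, and really the only point requiring care, is the justification in the first two steps that $\Fm(E)$ is genuinely a trace-class operator — a priori it lives only in the compactification $\Bcom$ — and that $\snorm{\Fm(E)}_1 = \Tr(\Fm(E))$. Both rest on the domination $\Fm(E) \le \mathcal{C}_0$ combined with the ideal property of $S_1(H)$, which is precisely what Assumption \ref{as:Ch} secures; without summability of $\mathcal{C}_0$ in trace norm the total mass could be infinite and $\mu_{\Fm}$ would fail to be finite. The final interchange of summations is harmless since every term is non-negative.
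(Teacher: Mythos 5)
Your proof is correct and follows essentially the same route as the paper's: both invoke the functional Herglotz theorem (via Proposition \ref{prop:nonnegCh}) to obtain the operator-valued measure $\Fm$, and then verify the measure axioms for $E \mapsto \snorm{\Fm(E)}_1$ by expanding the trace over an orthonormal basis and interchanging the resulting double sum (the paper via Fubini with finiteness of $\Fm([-\pi,\pi])$, you via Tonelli for non-negative terms). If anything, your write-up is slightly more complete at one point the paper glosses over: the paper simply asserts that under Assumption \ref{as:Ch} the Herglotz measure is $S_1(H)^+$-valued, whereas you supply the justification --- $0 \le \Fm(E) \le \Fm([-\pi,\pi]) = \mathcal{C}_0 \in S_1(H)^+$ together with the order-ideal property of the positive trace-class cone.
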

\begin{proof}[Proof of Lemma \ref{prop:muFm}]
By Proposition \ref{prop:nonnegCh}, the covariance function $\mathcal{C}_{(\cdot)}: \znum \to S_1(H)$ of a weakly stationary $H$-valued time series is non-negative definite. Using Corollary \ref{cor:Ch}, Theorem \ref{Herglotzthm} implies this function uniquely determines a $S_1(H)^+$-valued measure $\Fm$ on $[-\pi,\pi]$. Using the properties of $\Fm$, it is now straightforward to verify that the function $\mu_{\Fm} : \mathcal{B} \to [0,\infty]$ is a non-negative scalar-valued measure on the measurable space $([-\pi,\pi],\mathcal{B})$. Firstly, for each Borel set $E \subseteq [-\pi,\pi]$ and every $e \in H$, we have $\innerprod{\Fm(E) e}{e}\ge 0$ and thus  $\mu_{\Fm} = \Tr(\Fm) \ge 0$. Secondly, $\innerprod{\Fm(\emptyset) e}{e} = \innerprod{O_H e}{e} =\mu_{\Fm}(\emptyset) = 0$ which follows by definition of $\Fm$. Thirdly, for all countable selections of pairwise disjoint set $\{E_i\}_{i \in \nnum}$ in $\mathcal{B}$, countable additivity of $\Fm$ yields
\[ \mu_{\Fm}\big(\bigcup_{i=1}^{\infty}E_i\big) = \sum_{j}^{\infty}\innerprod{\Fm( \bigcup_{i=1}^{\infty}E_i) e_j}{e_j} = \sum_{j=1}^{\infty}\innerprod{\sum_{i}^{\infty}\Fm( E_i) e_j}{e_j},\]
where $\{e_j\}_{j \in \nnum}$ is an orthonormal basis of $H$. By continuity of the inner product and the fact that $\Fm([-\pi,\pi])< \infty$, Fubini's theorem implies
\[ \sum_{i=1}^{\infty} \sum_{j=1}^{\infty}\innerprod{\Fm( E_i) e_j}{e_j} =\sum_{i=1}^{\infty} \snorm {\Fm(E_i)}_1 = \sum_{i=1}^{\infty} \mu_{\Fm}(E_i)  \]
and thus $\mu_{\Fm}$ is countably additive. Finally, since $\Fm$ is $S_1(H)^+$-valued measure on $[-\pi,\pi]$ it is direct that $\mu_\Fm([-\pi,\pi]) <\infty$.
\end{proof}

Additionally, to be able to properly define the spectral representation we require the notion of a $H$-valued orthogonal increment process. 
\begin{definition}\label{Zproc}
A $H$-valued random process $\{Z_\omega \colon -\pi \le \omega \le \pi \}$ defined on a probability space $(\Omega, \mathcal{A},\prob)$ is a functional orthogonal increment process, if for all $g_1, g_2 \in H$ and $ -\pi \le \omega \le \pi$
\begin{romanlist}
\item the operator $\E(Z_\omega \otimes Z_{\omega})$ is an element of $S_{1}(H)^+$
\item  $\E\innerprod{Z_\omega}{g_1}= 0$
\item $\innerprod{\E\big((Z_{\omega_4} - Z_{\omega_3})\otimes {(Z_{\omega_2} - Z_{\omega_1})}\big)g_1}{g_2}= 0, \qquad (\omega_1,\omega_2] \cap (\omega_3,\omega_4]=\emptyset$
\item $\innerprod{\E\big((Z_{\omega+\varepsilon} - Z_{\omega})\otimes {Z_{\omega+\varepsilon} - Z_{\omega}}\big)g_1}{g_2}\to 0$ as $\varepsilon \downarrow 0$.
\end{romanlist}
\end{definition}
To establish the isomorphism, let $\mathcal{H} \subset \Hspace$ denote the space spanned by all finite linear combinations of the random functions $X_t$, i.e., $\mathcal{H}= {\text{sp}}\{X_t \colon t \in \znum\}$. We remark that the inner product on $\Hspace$ satisfies
\[
\innerprod{X_1}{X_2}_{\mathcal{\Hspace}} =\Tr(\E(X_1 \otimes X_2) ). \tageq \label{eq:inTr}
\]
Furthermore let the space $\mathscr{H}$ denote the space of all square-integrable functions on $[-\pi,\pi]$ with respect to the measure $\mu_\Fm$, i.e., $\mathscr{H}= L^2([-\pi,\pi], \mu_{\Fm})$. This space becomes a Hilbert space once we endow it with the inner product 
\[
\innerprod{f}{g}_{\mathscr{H}} =  \int_{-\pi}^{\pi} f(\omega) \widebar{g(\omega)}d \mu_\Fm(\omega) =\Tr(\int_{-\pi}^{\pi} f(\omega)  \overline{g(\omega)}d\Fm(\omega)) \qquad f,g \in H,
\]
where the last equality follows by non-negative definiteness of $\Fm$ and linearity of the trace operator. 

\begin{theorem} \label{isomorph}
\textcolor{black}{Let $X$ be a weakly stationary functional time series, and let} $\Fm$ be the $S_{1}(H)^+$-valued measure corresponding to the process $\{X_t\}$. Then there exists an isometric isomorphism $\mathcal{T}$ between $\overline{\text{sp}}\{X_t\}$ and $L^2([-\pi,\pi], \mu_{\Fm})$ such that 
\[\mathcal{T}X_t = e^{\im t \cdot}, \qquad t \in \znum.\]
The process defined by 
\[Z_{\omega} = \mathcal{T}^{-1}\big(1_{(-\pi,\omega]}(\cdot)\big)\] 
is then a functional orthogonal increment process of which the covariance structure is uniquely determined by $\Fm$ and satisfies
\[ 
\mathbb{E}\big[(Z_{\omega}-Z_{\lambda}) \otimes( Z_{\omega}-Z_{\lambda})\big]= \Fm(\omega)-  \Fm(\lambda), \quad -\pi \le \lambda \le \omega \le \pi. 
\]
\end{theorem}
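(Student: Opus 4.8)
The plan is to build $\mathcal{T}$ first on the dense subspace $\mathcal{H} = \text{sp}\{X_t\}$ and then transport every structural property through it. I would define $\mathcal{T}$ by extending $\mathcal{T}X_t = e^{\im t\cdot}$ sesquilinearly to finite linear combinations. The crux of the isometry is the inner-product computation: by \eqref{eq:inTr} and weak stationarity, $\innerprod{X_s}{X_t}_{\Hspace} = \Tr(\E(X_s \otimes X_t)) = \Tr(\mathcal{C}_{s-t})$, while taking traces in the functional Herglotz representation (Theorem \ref{Herglotzthm}) and recalling \eqref{mufm} yields
\[
\Tr(\mathcal{C}_{s-t}) = \int_{-\pi}^{\pi} e^{\im(s-t)\omega}\,d\mu_{\Fm}(\omega) = \innerprod{e^{\im s\cdot}}{e^{\im t\cdot}}_{\mathscr{H}}.
\]
Sesquilinearity upgrades this to $\innerprod{Y}{W}_{\Hspace} = \innerprod{\mathcal{T}Y}{\mathcal{T}W}_{\mathscr{H}}$ for all $Y,W \in \mathcal{H}$, which simultaneously shows $\mathcal{T}$ is well defined on equivalence classes and isometric, so it extends uniquely to an isometry on $\overline{\mathcal{H}} = \overline{\text{sp}}\{X_t\}$. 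For surjectivity, the range of $\mathcal{T}$ is closed and contains every $e^{\im t\cdot}$, and since $\mu_{\Fm}$ is finite (Lemma \ref{prop:muFm}) the trigonometric system is complete in $L^2([-\pi,\pi],\mu_{\Fm}) = \mathscr{H}$; hence $\mathcal{T}$ is onto and therefore an isometric isomorphism. Finiteness of $\mu_{\Fm}$ also guarantees $1_{(-\pi,\omega]} \in \mathscr{H}$, so $Z_\omega := \mathcal{T}^{-1}\big(1_{(-\pi,\omega]}\big)$ is a well-defined element of $\overline{\text{sp}}\{X_t\} \subset \Hspace$.

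The main work — and the step I expect to be the genuine obstacle — is recovering the full operator-valued covariance rather than merely its trace, since the isometry transfers only the scalar pairing. I would establish, as an identity in $S_1(H)$,
\[
\E\big(\mathcal{T}^{-1}f \otimes \mathcal{T}^{-1}g\big) = \int_{-\pi}^{\pi} f(\omega)\,\overline{g(\omega)}\,d\Fm(\omega), \qquad f,g \in \mathscr{H}.
\]
For $f = e^{\im s\cdot}$ and $g = e^{\im t\cdot}$ both sides equal $\mathcal{C}_{s-t} = \int_{-\pi}^{\pi} e^{\im(s-t)\omega}\,d\Fm(\omega)$ by the Herglotz representation, and sesquilinearity extends the identity to all trigonometric polynomials. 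To pass to general $f,g$ I would note that both sides are bounded sesquilinear maps into $(S_1(H),\snorm{\cdot}_1)$: on the left, $\snorm{\E(Y \otimes W)}_1 \le \norm{Y}_{\Hspace}\norm{W}_{\Hspace}$ combined with the isometry of $\mathcal{T}^{-1}$, and on the right, $\snorm{\int_{-\pi}^{\pi} f\bar g\,d\Fm}_1 \le \int_{-\pi}^{\pi} |f|\,|g|\,d\mu_{\Fm} \le \norm{f}_{\mathscr{H}}\norm{g}_{\mathscr{H}}$ by Cauchy--Schwarz; density of the trigonometric polynomials then forces equality throughout. Specialising to $f = g = 1_{(\lambda,\omega]}$ and using $1_{(\lambda,\omega]}^2 = 1_{(\lambda,\omega]}$ gives
\[
\E\big[(Z_\omega - Z_\lambda)\otimes(Z_\omega - Z_\lambda)\big] = \int_{(\lambda,\omega]} d\Fm = \Fm(\omega) - \Fm(\lambda).
\]

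Finally, I would read off the four conditions of Definition \ref{Zproc} from this covariance formula together with the zero-mean assumption. Taking $\lambda = -\pi$, so that $Z_{-\pi} = \mathcal{T}^{-1}(0) = 0$, gives $\E(Z_\omega \otimes Z_\omega) = \Fm(\omega) \in S_1(H)^+$, which is (i). Condition (ii) holds because $Y \mapsto \E\innerprod{Y}{g_1}$ is continuous on $\Hspace$ and vanishes on each $X_t$, hence on the closed span containing $Z_\omega$. For (iii), disjointness of $(\omega_1,\omega_2]$ and $(\omega_3,\omega_4]$ makes the product of the corresponding indicators vanish, so the operator identity gives $\E\big((Z_{\omega_4}-Z_{\omega_3})\otimes(Z_{\omega_2}-Z_{\omega_1})\big) = O_H$. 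Condition (iv) reduces to right-continuity of $\Fm$, established in Theorem \ref{Herglotzthm}, since $\E\big((Z_{\omega+\varepsilon}-Z_\omega)\otimes(Z_{\omega+\varepsilon}-Z_\omega)\big) = \Fm(\omega+\varepsilon)-\Fm(\omega) \to O_H$ as $\varepsilon\downarrow 0$; and the covariance being determined by $\Fm$ is immediate from the displayed formula. Once the operator-valued covariance identity is secured these verifications are routine, so the whole difficulty is concentrated in the density-and-continuity upgrade of the second paragraph.
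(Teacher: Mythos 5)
Your proposal is correct, and its first half --- defining $\mathcal{T}$ on $\text{sp}\{X_t\}$, computing $\innerprod{X_s}{X_t}_{\Hspace}=\Tr(\mathcal{C}_{s-t})=\int_{-\pi}^{\pi}e^{\im(s-t)\omega}\,d\mu_{\Fm}(\omega)$ via traces of the Herglotz representation, extending by density, and getting surjectivity from Fej\'er's theorem plus density of continuous functions in $L^2$ of the finite Radon measure $\mu_{\Fm}$ --- is exactly the paper's argument. Where you genuinely diverge is the step you yourself flag as the obstacle: recovering the operator-valued covariance from a scalar isometry. The paper is very terse here: it computes $\innerprod{Z_{\omega_2}-Z_{\omega_1}}{Z_{\omega_2}-Z_{\omega_1}}_{\Hspace}$, rewrites it as $\Tr\big(\int_{-\pi}^{\pi} 1_{(\omega_1,\omega_2]}(\omega)\,d\Fm(\omega)\big)$, and then states that this ``implies'' $\E\big[(Z_{\omega_2}-Z_{\omega_1})\otimes(Z_{\omega_2}-Z_{\omega_1})\big]=\Fm(\omega_2)-\Fm(\omega_1)$ in $\snorm{\cdot}_1$ --- a passage from equality of traces to equality of operators that is left without explicit justification (equality of traces of two non-negative operators does not by itself force equality of the operators). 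Your density-and-continuity argument supplies precisely the missing bridge: you verify the $S_1(H)$-valued identity $\E(\mathcal{T}^{-1}f\otimes\mathcal{T}^{-1}g)=\int_{-\pi}^{\pi} f\bar g\,d\Fm$ on exponentials, where both sides equal $\mathcal{C}_{s-t}$ by Theorem \ref{Herglotzthm}, and then extend it to all of $\mathscr{H}\times\mathscr{H}$ using the bounds $\snorm{\E(Y\otimes W)}_1\le\|Y\|_{\Hspace}\|W\|_{\Hspace}$ and $\snorm{\int f\bar g\,d\Fm}_1\le\int|f||g|\,d\mu_{\Fm}$ together with density of trigonometric polynomials. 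This buys two things the paper does not spell out: a rigorous proof of the covariance identity, and condition (iii) of Definition \ref{Zproc} in its full operator form (the paper's display only shows that the \emph{trace} of the cross-covariance of increments over disjoint intervals vanishes, while your identity makes the operator itself vanish). The single point deserving one more line in your write-up is the asserted bound $\snorm{\int h\,d\Fm}_1\le\int|h|\,d\mu_{\Fm}$ for complex-valued $h$: it follows by duality, since for any $B$ with $\opnorm{B}\le1$ the complex measure $E\mapsto\Tr(B\,\Fm(E))$ has total variation dominated by $\mu_{\Fm}$, so $|\Tr(B\int h\,d\Fm)|\le\int|h|\,d\mu_{\Fm}$, and one takes the supremum over such $B$.
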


\begin{proof}[Proof of Theorem \ref{isomorph}]
Consider first the mapping $\mathcal{T} : \mathcal{H} \to \mathscr{H}$ given by
\[
\mathcal{T} (\sum_{j=1}^{n}a_j X_{t_j})= \sum_{j=1}^{n}a_j e^{\im t \cdot}  \qquad t \in \znum.
\]
It is straightforward to see the mapping is linear and preserves inner products. Let $Y = \sum_{j=1}^{n}a_j X_{t_j}$ and $W = \sum_{j=1}^{n}b_j X_{t_j}$. By Theorem \ref{Herglotz},
\begin{align*}
\innerprod{\mathcal{T}Y}{\mathcal{T}W}_\mathscr{H} & =\sum_{i,j=1}^{n}a_i \overline{b}_j \innerprod{e^{\im \cdot t_i}}{e^{\im \cdot t_j}}_\mathscr{H} 
\\& =\sum_{i,j=1}^{n}a_i \overline{b}_j \int_{-\pi}^{\pi} e^{\im \lambda (t_i-t_j)} d\mu_{\Fm}(\lambda)
\\& =\sum_{i,j=1}^{n}a_i \overline{b}_j \Tr(\int_{-\pi}^{\pi} e^{\im \lambda (t_i-t_j)} d{\Fm}(\lambda))
\\& =\lsum_{i,j=1}^{n}a_i \overline{b}_j \Tr(C_{i-j} )= \lsum_{i,j=1}^{n}a_i \overline{b}_j \innerprod{X_{t_i}}{ X_{t_j}}_{\Hspace} = \innerprod{Y}{W}_\Hspace.
\end{align*}
For the extension of the isomorphism over the closure of $\mathcal{H}$ onto the closure of $\mathscr{H}$, note that if $Y$ is an element of $\bar{\mathcal{H}}$ then there must exist a sequence $\{Y_n\}_{n \ge 1} \in \mathcal{H}$ converging to $Y$. Denote  $\mathcal{T}({Y})$ to be the limit of $\mathcal{T}(Y_n)$, i.e., 
\[\mathcal{T}(Y) = \lim_{n \to \infty}\mathcal{T}(Y_n).\]
Since $\{Y_n\}$ is a Cauchy sequence and $\mathcal{T}$ norm-preserving, $\{\mathcal{T}Y_n\}$ is a Cauchy sequence in $L^2([-\pi,\pi],\mu_{\Fm})$ and thus $\mathcal{T}(Y) \in \bar{\mathscr{H}}$. If there is another sequence $\{Y^{'}_n\} \in \mathcal{H}$ converging to ${Y}$, then the limit must be unique since
\[
\lim_{n \to \infty}\norm{\mathcal{T}(Y_n)- \mathcal{T}(Y^{'}_n)}_{\mathscr{H}} =\lim_{n \to \infty} \norm{Y_n - Y^{'}_n}_{\Hspace} = 0,
\]
and therefore the extension is well-defined. Preservation of linearity and the isometry property are straightforward from linearity of $\mathcal{T}$ on $\mathcal{H}$ and continuity of the inner product, respectively. To show that the closure of $\mathscr{H}$ is in fact $L^2([-\pi,\pi], \mu_{\Fm})$, we recall that the Stone-Weierstrass theorem (Fej{\'e}r's theorem) implies that $\mathscr{H}$ is dense in the space of $2\pi$-periodic continuous functions on $[-\pi,\pi]$. Moreover, by Proposition \ref{prop:muFm}, $\mu_{\Fm}$ is a finite Radon measure (i.e., finite and regular) on $[-\pi,\pi]$. The set of continuous functions with compact support are therefore in turn uniformly dense
in $L^2 ([-\pi,\pi], \mu_{\Fm})$ \citep[see e.g.,][]{bog06,R87}. Consequently we find $\bar{\mathscr{H}}=L^2 ([-\pi,\pi], \mu_{\Fm})$. The inverse mapping $\mathcal{T}^{-1}: L^2 ([-\pi,\pi], \mu_{\Fm}) \to \bar{\mathcal{H}}$ is therefore properly defined. This finishes the proof of the first part of the theorem.   \\

Les us then define, for any $\omega \in (-\pi,\pi]$, the process
\[ Z_{\omega} = \mathcal{T}^{-1}\big(1_{(-\pi,\omega]}(\cdot)\big)\] 
with $Z_{-\pi} \equiv 0 \in H$. By the established isometry, this process is well-defined in $\bar{\mathcal{H}}$. Therefore there must exist a sequence $\{Y_n\}$ in $\mathcal{H}$ such that $\lim_{n \to \infty} \|Y_n-Z_{\omega}\|_{\mathbb{H}} = 0$. Since all elements in the sequence have zero-mean, continuity of the inner product implies
\begin{align*}
\innerprod{Z_{\omega}}{f}_\Hspace & 
=  \lim_{n \to \infty} \Tr(\E (Y_n \otimes f))   = \lim_{n \to \infty} \innerprod{\E [Y_n]}{f} =0 \quad \forall f \nequiv 0 \in H 
\end{align*}
showing the process $\{Z_{\omega}: -\pi \le \omega \le \pi\}$ has zero mean. Additionally,
\begin{align*} 
\innerprod{Z_{\omega_4} - Z_{\omega_3}}{Z_{\omega_2} - Z_{\omega_1}}_{\Hspace}  
& 
= \innerprod{1_{(\omega_3,\omega_4]}(\cdot)}{ 1_{(\omega_1,\omega_2]}(\cdot)}_{\mathscr{H}} 
\\& =  \int_{-\pi}^{\pi} 1_{(\omega_3,\omega_4]}(\omega) 1_{(\omega_1,\omega_2]}(\omega) d\mu_{\Fm}(\omega). \label{orhinc} \tageq
\end{align*}
For all $(\omega_1,\omega_2] \cap (\omega_3,\omega_4]=\emptyset$, this inner product is zero while for $\omega_3 = \omega_1, \omega_4=\omega_2$ we have
\[
\innerprod{Z_{\omega_2} - Z_{\omega_1}}{Z_{\omega_2} - Z_{\omega_1}}_{\Hspace} = \mu_{\Fm}(\omega_2) -\mu_{\Fm}(\omega_1).
\qquad \omega_1 \le \omega_2\]
showing that the $\{Z_{\omega}\}$ is right-continuous. We can also write \eqref{orhinc} as 
\[\Tr\big( \int_{-\pi}^{\pi} 1_{(\omega_3,\omega_4]}(\omega) 1_{(\omega_1,\omega_2]}(\omega) d{\Fm}(\omega)\big).\]
For $\omega_3 = \omega_1, \omega_4=\omega_2$ this implies 
\[
\E (Z_{\omega_2} - Z_{\omega_1})\otimes {{(Z_{\omega_2} - Z_{\omega_1})}} = \Fm(\omega_2) -\Fm(\omega_1),
\qquad \omega_1 \le \omega_2.\]
where the equality holds in $\snorm{\cdot}_1$. The second order structure of $Z_{\omega}$ is therefore uniquely defined by the operator-valued measure $\Fm$ of the process $X$. 
\end{proof}
The generalization of the Cram{\'e}r representation to processes of which the spectral density operator is not necessarily well-defined is given in the following theorem.
\begin{theorem}[Functional Cram{\'e}r representation] \label{Cramer}
\textcolor{black}{Let $X$ be a weakly stationary functional time series.} Then there exists a right-continuous functional orthogonal increment process $\{Z_{\omega}, -\pi \le \omega \le \pi\}$ with $Z_{-\pi}\equiv 0 \in H$ such that
\[
X_t = \int_{-\pi}^{\pi} e^{\im t \omega} d Z_{\omega} \qquad \text{a.s.}
\]
\end{theorem}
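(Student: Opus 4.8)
The plan is to exploit the isometric isomorphism $\mathcal{T}$ constructed in Theorem \ref{isomorph}, which already supplies both the orthogonal increment process $Z_\omega = \mathcal{T}^{-1}(1_{(-\pi,\omega]}(\cdot))$ (with $Z_{-\pi}\equiv 0$, right-continuous, and satisfying the increment properties of Definition \ref{Zproc}) and the correspondence $\mathcal{T}X_t = e^{\im t\cdot}$. The representation will follow once the stochastic integral $\int_{-\pi}^\pi g(\omega)\,dZ_\omega$ is defined and identified with $\mathcal{T}^{-1}g$, so that evaluating at the exponential $g(\omega)=e^{\im t\omega}$ returns $X_t$.

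First I would define the integral against $Z_\omega$ in the standard fashion. For a simple function $g = \sum_{k=1}^{n} c_k\,1_{(\omega_{k-1},\omega_k]}$ associated with a partition $-\pi=\omega_0<\dots<\omega_n=\pi$, set $\int_{-\pi}^\pi g\,dZ_\omega := \sum_{k=1}^{n} c_k\,(Z_{\omega_k}-Z_{\omega_{k-1}}) \in \Hspace$. Using the orthogonal-increment property (iii) of Definition \ref{Zproc} together with \eqref{eq:inTr}, the cross terms in $\E\norm{\int g\,dZ_\omega}_2^2$ vanish, and the diagonal terms reduce via $\E\norm{Z_{\omega_k}-Z_{\omega_{k-1}}}_2^2 = \mu_{\Fm}(\omega_k)-\mu_{\Fm}(\omega_{k-1})$ (established in the proof of Theorem \ref{isomorph}) to the isometry $\E\norm{\int g\,dZ_\omega}_2^2 = \int_{-\pi}^\pi |g(\omega)|^2\,d\mu_{\Fm} = \norm{g}_{\mathscr{H}}^2$. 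Since $\mu_{\Fm}$ is finite by Lemma \ref{prop:muFm}, the simple functions are dense in $\mathscr{H}=L^2([-\pi,\pi],\mu_{\Fm})$, so this isometry extends the integral to all of $\mathscr{H}$ by the usual Cauchy-sequence argument.

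Next I would identify this integral with $\mathcal{T}^{-1}$. By construction $Z_\omega = \mathcal{T}^{-1}(1_{(-\pi,\omega]}(\cdot))$, so for simple $g$ linearity of $\mathcal{T}^{-1}$ gives $\int_{-\pi}^\pi g\,dZ_\omega = \sum_k c_k\,\mathcal{T}^{-1}(1_{(\omega_{k-1},\omega_k]}) = \mathcal{T}^{-1}g$. As both $g\mapsto \int g\,dZ_\omega$ and $\mathcal{T}^{-1}$ are isometries from $\mathscr{H}$ into $\Hspace$ agreeing on the dense subset of simple functions, they coincide on all of $\mathscr{H}$. Because $\mu_{\Fm}([-\pi,\pi])<\infty$, the function $\omega\mapsto e^{\im t\omega}$ lies in $\mathscr{H}$, and Theorem \ref{isomorph} yields $\mathcal{T}X_t = e^{\im t\cdot}$; hence $\int_{-\pi}^\pi e^{\im t\omega}\,dZ_\omega = \mathcal{T}^{-1}(e^{\im t\cdot}) = X_t$, the equality holding in $\Hspace$ and therefore almost surely as $H$-valued random elements.

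The \emph{main obstacle} is the careful construction of the $H$-valued integral against $Z_\omega$ and the verification of its defining isometry in $\Hspace$; once this is secured, the identification with $\mathcal{T}^{-1}$ and the evaluation at the exponential are essentially bookkeeping already prepared by Theorem \ref{isomorph}.
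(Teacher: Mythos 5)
Your proposal is correct and follows essentially the same route as the paper: the paper likewise defines the map $\mathcal{I}(g)=\sum_i a_i (Z_{\omega_{i+1}}-Z_{\omega_i})$ on simple functions, identifies it with $\mathcal{T}^{-1}$ from Theorem \ref{isomorph}, and obtains $X_t=\mathcal{T}^{-1}(e^{\im t\cdot})$ as the Riemann--Stieltjes limit of the approximating sums. Your write-up merely makes explicit the isometry on simple functions and the density extension, details the paper leaves implicit.
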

\begin{proof}[Proof of Theorem \ref{Cramer}]
Consider the subspace $\mathscr{H}_s$ of $\bar{\mathscr{H}}$ containing the simple functions, i.e., the space $\mathscr{H}_s$ contains elements of the form
\[
\textcolor{black}{g(\omega) = \sum_{i=1}^{n} a_i 1_{(\omega_{i},\omega_{i+1}]}}
\]
for a partition $P_n =\{ - \pi = \omega_0 < \omega_1  < \cdots < \omega_{n+1} = \pi\}$ of $[-\pi,\pi]$ and  $a_i \in \cnum$. 
Then define the mapping $\mathcal{I}: \mathscr{H}_s \to \bar{\mathcal{H}}$ given by 
\[
\mathcal{I}(g) = \sum_{i=0}^{n} a_i (Z_{\omega_{i+1}}-Z_{\omega_i}).
\]
By Theorem \ref{isomorph}, this is an isomorphism from $\bar{\mathscr{H}}$ onto $\bar{\mathcal{H}}$ and coincides with $\mathcal{T}^{-1}$. More specifically, $\mathcal{I}(e^{\im \cdot t}) =\mathcal{T}^{-1}(e^{\im \cdot t}) =\mathcal{T}^{-1}\mathcal{T} (X_t)= X_t  $ and the statement of the Theorem follows by taking the Riemann-Stieltjes integral limit 
\[\bignorm{X_t -  \sum_{i=0}^{n} e^{\im \omega_i t} (Z_{\omega_{i+1}}-Z_{\omega_i} )}^2_{\Hspace} \to 0 \text{ as } n \to  \infty,\]
where $\text{mesh}(P_n) \to 0$ as $n \to \infty$. More generally, for any $g \in L^2([-\pi,\pi], \mu_{\Fm})$, the mapping $\mathcal{I}(g)$ corresponds to the Riemann--Stieltjes integral with respect to the orthogonal increment process $Z_{\omega}$. 
 \end{proof}
 
\textcolor{black}{\begin{remark}
\textit{It is worth to mention that, by means of the isometric isomorphism in Theorem \ref{isomorph}, we can write $Z_{\omega}$ also in terms of the limit of a weighted sum of the functions $X_t$. Firstly, we note that the indicator function $1_{(-\pi,\omega]}(\cdot)$ can be approximated in $L^2([-\pi,\pi],\mu_{\Fm})$ by the $N$-th order Fourier series approximation $
b_{N}(\lambda) =\lsum_{|t|\leq N}\tilde{b}_{\omega, t}\,e^{\im t \lambda}$
where the Fourier coefficients are given by $\tilde{b}_{\omega,t} =\frac{1}{2\pi} \int^{\pi}_{-\pi} 1_{(-\pi,\omega]}(\lambda)\,e^{-\im t \lambda}\,d\lambda$. More specifically, we have $\|b_N(\cdot) -1_{(-\pi,\omega]}(\cdot) \|^2_\mathscr{H} \to 0$ as $N \to \infty$ \cite[see e.g.,][]{BrockwellDavis}. Consider therefore the following weighted sum
\begin{align*}
Z^{(N)}_{\omega} = \frac{1}{2\pi}\sum_{|t|\leq N}X_{t}\int^{\pi}_{-\pi} 1_{(-\pi,\omega]}(\lambda)\,e^{\im t \lambda}\,d\lambda=\sum_{|t|\leq N}\tilde{b}_{\omega,t}\,X_{t},
\end{align*} 
Then by the Theorem  \ref{isomorph}
\[
\lim_{N \to \infty}\|Z^{N}_{\omega}-Z_{\omega}\|^2_{\mathbb{H}} = \|\mathcal{T}^{-1}b_N(\cdot) -\mathcal{T}^{-1}1_{(-\pi,\omega]}(\cdot) \|^2_\mathbb{H} = \|b_N(\cdot) -1_{(-\pi,\omega]}(\cdot) \|^2_\mathscr{H}=0.
\]}
 \end{remark}}
 
 
In case of discontinuities in the spectral measure $\Fm$ we can decompose the process into a purely indeterministic component and a purely deterministic component. 
 \begin{proposition}\label{cramer_disc}
Assume the spectral measure $\Fm$ of a \textcolor{black}{weakly stationary functional time series} $\{X_t\}$ has $k$ points of discontinuity at $\omega_1, \ldots, \omega_k$. Then with probability one
\begin{align}\label{eq:cramer_disc}
 X_t = \int_{(-\pi, \pi] \setminus  \{ \omega_1, \ldots, \omega_k\}} e^{\im t \omega} d Z_{\omega} + \sum_{\ell=1}^{k}(Z_{\omega_\ell} - Z_{\omega^{-}_\ell})e^{\im t \omega_\ell},
\end{align}
\textcolor{black}{where $Z_{\omega^{-}_\ell} = \lim_{\omega \uparrow \omega_l} \|Z_{\omega_\ell}-Z_{\omega} \|^2_\mathbb{H}=0$}. Furthermore,   
all terms on the right hand side of \eqref{eq:cramer_disc} are uncorrelated and
\[
\Var(Z_{\omega_\ell} - Z_{\omega^{-}_\ell} ) = \Fm_{\omega_\ell}-\Fm_{\omega^{-}_\ell}
\]
for each $\ell=1,\ldots, k$.
 \end{proposition}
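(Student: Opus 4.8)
The plan is to decompose the spectral measure $\Fm$ into its continuous and discrete (atomic) parts and then apply the Cram{\'e}r representation of Theorem \ref{Cramer} to each piece. First I would write $\Fm = \Fm_c + \Fm_d$, where $\Fm_d = \sum_{\ell=1}^k (\Fm_{\omega_\ell} - \Fm_{\omega_\ell^-})\,\delta_{\omega_\ell}$ collects the $k$ atoms of the measure and $\Fm_c$ is the remaining part, which is continuous and supported on $(-\pi,\pi]\setminus\{\omega_1,\ldots,\omega_k\}$. Correspondingly, since $\mu_{\Fm}(E) = \snorm{\Fm(E)}_1$ is a genuine finite scalar measure by Lemma \ref{prop:muFm}, the orthogonal increment process $Z_\omega = \mathcal{T}^{-1}(1_{(-\pi,\omega]})$ has right-continuous paths whose jump at each discontinuity $\omega_\ell$ is exactly the increment $Z_{\omega_\ell} - Z_{\omega_\ell^-}$, where $Z_{\omega_\ell^-}$ is defined as the left limit in $\mathbb{H}$. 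The identity $X_t = \int_{-\pi}^{\pi} e^{\im t\omega}\,dZ_\omega$ a.s.\ from Theorem \ref{Cramer} then splits, by linearity of the Riemann--Stieltjes integral against $Z_\omega$, into an integral over the continuous part plus a finite sum over the atoms, which is precisely \eqref{eq:cramer_disc}.

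The key steps, in order, would be as follows. First I would make precise that the jumps of $Z_\omega$ sit exactly at the $\omega_\ell$: from Theorem \ref{isomorph} we have $\E\big[(Z_{\omega_2}-Z_{\omega_1})\otimes(Z_{\omega_2}-Z_{\omega_1})\big] = \Fm_{\omega_2}-\Fm_{\omega_1}$, so the left limit $Z_{\omega_\ell^-}$ exists in $\mathbb{H}$ and the jump increment $Z_{\omega_\ell}-Z_{\omega_\ell^-}$ has variance operator $\Fm_{\omega_\ell}-\Fm_{\omega_\ell^-}$, which is nonzero precisely at the points of discontinuity of $\Fm$; this establishes the claimed formula $\Var(Z_{\omega_\ell}-Z_{\omega_\ell^-}) = \Fm_{\omega_\ell}-\Fm_{\omega_\ell^-}$. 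Second, I would argue the orthogonality of all $k+1$ terms: the increments over the singleton $\{\omega_\ell\}$ and the increment over $(-\pi,\pi]\setminus\{\omega_1,\ldots,\omega_k\}$ correspond, under the isometry $\mathcal{T}$, to indicator functions of pairwise disjoint Borel sets, and the orthogonal-increment property (item (iii) of Definition \ref{Zproc}) together with $\innerprod{1_{E_1}}{1_{E_2}}_{\mathscr{H}} = \int 1_{E_1}1_{E_2}\,d\mu_{\Fm} = 0$ for disjoint $E_1,E_2$ gives vanishing cross-covariance operators, hence the terms are mutually uncorrelated. Third, I would verify the decomposition of the integral itself by approximating $X_t$ by Riemann--Stieltjes sums as in the proof of Theorem \ref{Cramer} and observing that, in the $\mathbb{H}$-limit, the contributions of the mesh-intervals containing an atom $\omega_\ell$ converge to $(Z_{\omega_\ell}-Z_{\omega_\ell^-})e^{\im t\omega_\ell}$ while the rest converges to the integral over the continuous part.

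I expect the main obstacle to be the bookkeeping around the atoms in the Riemann--Stieltjes limit: one must show that refining the partition so that each atom $\omega_\ell$ eventually lies in its own vanishing mesh-interval isolates the jump cleanly, i.e.\ that the integral over $(-\pi,\pi]\setminus\{\omega_1,\ldots,\omega_k\}$ genuinely omits the atomic mass and the phase factor $e^{\im t\omega}$ evaluated on that shrinking interval converges to $e^{\im t\omega_\ell}$ by continuity. Because $\mu_{\Fm}$ is a finite measure with only finitely many atoms, this is a matter of splitting $[-\pi,\pi]$ into the finite atomic support and its complement and using dominated convergence for the scalar measure $\mu_{\Fm}$ transported back through the isometry; the finiteness of $k$ and of $\mu_{\Fm}([-\pi,\pi])$ makes every limit legitimate. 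A minor point requiring care is the precise meaning of $Z_{\omega_\ell^-}$ as a mean-square (i.e.\ $\mathbb{H}$-norm) left limit, which follows from right-continuity of $Z_\omega$ and the existence of one-sided limits guaranteed by the monotonicity of $\mu_{\Fm}$; I would state this explicitly rather than leave it implicit in the notation.
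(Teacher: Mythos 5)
Your proposal is correct and follows essentially the same route as the paper's proof: you establish the left limits $Z_{\omega_\ell^-}$ in $\mathbb{H}$ via monotonicity of the scalar measure $\mu_{\Fm}$, obtain the variance formula from the covariance identity of Theorem \ref{isomorph}, get uncorrelatedness from disjointness of supports under the isometry $\mathcal{T}$, and split the spectral integral by isolating each atom in a shrinking neighborhood and passing to the limit through the isometry. The paper phrases this last step by excising $\delta$-intervals $(\omega_\ell-\delta,\omega_\ell+\delta]$ and applying Minkowski's inequality plus right-continuity of $Z_\omega$, rather than via Riemann--Stieltjes partitions, but the mechanism is identical.
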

 The proof is relegated to the Appendix. The spectral representation in Proposition \ref{cramer_disc} can be used to define a Cram{\'e}r--Karhunen--Lo{\`e}ve representation for processes of which the spectral measure has finitely many discontinuities. 

\begin{definition}[Cram{\'e}r--Karhunen--Lo{\`e}ve representation] \label{CKL}
Suppose that \textcolor{black}{the weakly stationary functional time series} $X=\{X_t\}$ is given by
\[
X_t = \int_{M} e^{\im t \omega} d Z_{\omega} +\sum_{\ell=1}^{k} (Z_{\omega_\ell} - Z_{\omega^{-}_\ell})e^{\im t \omega_\ell},
\]
where $M=(-\pi, \pi] \setminus  \{ \omega_1,\ldots, \omega_k\}$ for $-\pi<\omega_1<\ldots<\omega_k\leq\pi$, and let $\Fm$ be the spectral measure of $X$. \textcolor{black}{Furthermore, assume there exists an operator-valued function $\omega \mapsto \F_{\omega}$ such that $\Fm(M) = \int_{M} \F_{\omega} d\omega$} 
with eigendecomposition
\[\F_{\omega} = \sum_{j=1}^{\infty} \nu^{\omega}_j\phi^{\omega}_j \otimes \phi^{\omega}_j.\]
Furthermore, let
\[
 \Fm(\omega_\ell)-  \Fm({\omega^{-}_\ell}) = \sum_{j=1}^{\infty} \nu^{\omega_\ell}_j\phi^{\omega_\ell}_j \otimes \phi^{\omega_\ell}_j
\]
be the eigendecomposition of $\Fm(\omega_\ell)-  \Fm({\omega^{-}_\ell})$ for $\ell=1,\ldots,k$. Then, we can write
\[X_t = \int_{(-\pi, \pi] \setminus   \{ \omega_1,\ldots, \omega_k\}} e^{\im t \omega} \big(\lsum_{j=1}^{\infty} \phi^{\omega}_j \otimes \phi^{\omega}_j \big) d Z_{\omega} +\lsum_{\ell=1}^{k} \big(\lsum_{j=1}^{\infty}\phi^{\omega_\ell}_j \otimes \phi^{\omega_\ell}_j\big) (Z_{\omega_\ell} - Z_{\omega^{-}_\ell})e^{\im t \omega_\ell}.
\]
which is the \textit{Cram{\'e}r--Karhunen--Lo{\`e}ve representation} of the process $\{X_t\}$.  
\end{definition}
Note that the spectral measure for all measurable sets $[-\pi,\pi]$ has positive definite increments and therefore $ \Fm(\omega_\ell)-  \Fm({\omega^{-}_\ell})$ has an eigendecomposition with positive eigenvalues. 
If there are no discontinuities then the Cram{\'e}r--Karhunen--Lo{\`e}ve representation simply coincides with the indeterministic component of Definition \ref{CKL}, i.e., 
\[
X_t = \int_{(-\pi, \pi]} e^{\im t \omega} \big(\lsum_{j=1}^{\infty} \phi^{\omega}_j \otimes \phi^{\omega}_j \big) d Z_{\omega} \tageq \label{CKL_con}
\]
In order to derive an optimal finite dimensional representation of the indeterministic component of the process, we require in Definition \ref{CKL} that there is a well-defined spectral density operator except on sets of measure zero. We remark that this assumption also covers a harmonic principal component analysis of  long-memory processes (see Remark \ref{longmem}) and holds under much weaker conditions \citep[see e.g.,][]{Hormann2015} than 
those stated in \cite{Panar2013b}, who originally derived a Cram{\'e}r--Karhunen--Lo{\`e}ve representation of the form \eqref{CKL_con} for processes with short-memory. \\
The Cram{\'e}r--Karhunen--Lo{\`e}ve representation in Definition \ref{CKL} can be seen to encapsulate the full second order dynamics of the process and gives insight into an optimal finite dimensional representation. \textcolor{black}{As originally noted in \cite{Panar2013b}, such a representation can be viewed as a} 
`double' spectral representation in the sense that it first decomposes the process into uncorrelated functional frequency components and in turn provides a spectral decomposition in terms of dimension. This is more easily seen by noting that formally we can write it as
\[X_t = \int_{(-\pi, \pi] \setminus   \{ \omega_1,\ldots, \omega_k\}} e^{\im t \omega} \lsum_{j=1}^{\infty}\inprod{ d Z_{\omega}}{\phi^{\omega}_j} \phi^{\omega}_j +\lsum_{\ell=1}^{k} \lsum_{j=1}^{\infty}\inprod{Z_{\omega_\ell} - Z_{\omega^{-}_\ell}}{\phi^{\omega_\ell}_j}  \phi^{\omega_\ell}_j e^{\im t \omega_\ell}.
\]
 Just like the Karhunen-Lo{\`e}ve representation for independent functional data, it separates the stochastic part from the functional part and provides information on the smoothness of the random curves. Furthermore, it enables to represent each frequency component into an optimal basis where its dimensionality can be derived from the relative contribution of the component to the total variation of the process. A truncation of the infinite sums at a finite level therefore allows an optimal way to construct a finite dimensional representation of the process. 
\\
 Such a truncation for processes that satisfy Definition \ref{CKL} requires that stochastic integrals of the form $\int_{-\pi}^{\pi} U_{\omega}\,dZ_{\omega}$ are well-defined where $U_{\omega}$ is an element of the Bochner space $\mathcal{B}_{\infty} =L^2_{S_{\infty}(H)}([-\pi,\pi],\mu_{\Fm})$ of all strongly measurable functions $U:[-\pi,\pi]\to S_{\infty}(H)$ such that 
\[
\|U\|^2_{\mathcal{B}_\infty} =\int_\Pi \snorm{U_{\omega}}^2_{\infty} d\mu(\omega)<\infty 
\] with
\[
\mu(E)=\int_{E} d \mu_{\Fm}(\omega),
\]
for all Borel sets $E\subseteq[-\pi,\pi]$ and where $\mu_{\Fm}$ is the measure in \eqref{mufm}. This is proved in the Appendix (Proposition  \ref{prop:Stochint}) and generalizes the result in Appendix B 2.3 of \citet{vde16}. With this in place, we obtain a harmonic principal component analysis for processes of which the spectral measure has finitely many jumps. 
\begin{corollary}[Harmonic functional principal component analysis] \label{cor:Hfpca}
Suppose $\{X_t\}$ has a Cram{\'e}r--Karhunen--Lo{\`e}ve representation as in Definition \ref{CKL}. Then,
\textcolor{black}{for any $p: [-\pi,\pi] \to \mathbb{N}$ c{\`a}dl{\`a}g, the random function
\[
X^{\star}_t = \int_{(-\pi, \pi] \setminus  \{ \omega_1,\ldots, \omega_k\}}  e^{\im \omega t} \Big(\sum_{j=1}^{p(\omega)} \phi^{\omega}_j \otimes \phi^{\omega}_j \Big) dZ_{\omega} +\lsum_{\ell=1}^{k} \big(\lsum_{j=1}^{p(\omega_\ell)}\phi^{\omega_\ell}_j \otimes \phi^{\omega_\ell}_j\big) (Z_{\omega_\ell} - Z_{\omega^{-}_\ell})e^{\im t \omega_\ell}.
\]
minimizes the mean squared error among all linear rank reductions of $\{X_t\}$ to a process $\{Y_t\}$ with representation $Y_t = \int_{-\pi}^{\pi} e^{\im \omega t} A_{\omega} d\omega$ where $A_\omega \in \mathcal{B}_{\infty}$ with $\text{rank}(A_{\omega}) \le p(\omega)$, i.e., 
\[
\|X_t - X^{\star}_t\|^2_{\mathbb{H}} \le 
\|X_t -Y_t\|^2_{\mathbb{H}}
\]
subject to the constraint $\text{rank}(A_{\omega}) \le p(\omega)$.
}
\textcolor{black}{The minimized error is given by
\[
\|X_t-X^{\star}_t\|^2_\mathbb{H} =\int_{(-\pi, \pi] \setminus \{\omega_1,\ldots, \omega_k\}} \big(\lsum_{j>p(\omega)} \nu^{\omega}_j \big)  d\omega + \lsum_{\ell=1}^{k}\big(\lsum_{j>p(\omega_\ell)} \nu^{\omega_\ell}_j \big).
\]}
\end{corollary}
\textcolor{black}{Note that the rank of $A_{\omega}$ constraints the dimensions of $H$-valued processes with representation $\int_{-\pi}^{\pi} e^{\im \omega t} A_{\omega} d\omega$ to a lower-dimensional subspace of $H$ \citep[see also][]{Panar2013b}. }
\begin{proof}
Without loss of generality, we prove this for the case of one discontinuity at frequency $\omega_o$. By orthogonality of the two parts the representation, we find using Proposition \ref{cramer_disc} and Fubini's theorem
\begin{align*}
\|X_t-Y_t\|^2_\mathbb{H} & =\Bignorm{\int_{(-\pi, \pi] \setminus  \{ \omega_o\}}\big(I-A_{\omega}\big) e^{\im \omega t}dZ_{\omega}}^2_\mathbb{H} + \Bignorm{\big(I-A_{\omega_o}\big) (Z_{\omega_o} - Z_{\omega^{-}_o})e^{\im t \omega_o}}^2_\mathbb{H}
\\ &= \int_{-\pi}^{\pi}  \Tr\big(   \big(I- A_{\omega} \big) \F_{\omega}d\omega\big(I- A_{\omega} \big)^\dagger \big) +  \Tr\big(   \big(I- A_{\omega_o} \big) \big(\Fm_{\omega_o}-\Fm_{\omega^{-}_o}\big)\big(I- A_{\omega} \big)^\dagger \big)
\end{align*}
From which it is straightforward to see that this is minimized $X^{\star}_t$ where error is given by 
\begin{align*}
\|X_t-X^{\star}_t\|^2_\mathbb{H}&= \Bignorm{\int_{(-\pi, \pi] \setminus  \{ \omega_o\}}   e^{\im \omega t} \big(\lsum_{j>p(\omega)} \phi^{\omega}_j \otimes \phi^{\omega}_j \big) dZ_{\omega}}^2_\mathbb{H} 
 +\Bignorm{\big(\lsum_{j> p(\omega_o)}\phi^{\omega_o}_j \otimes \phi^{\omega_o}_j\big) (Z_{\omega_o} - Z_{\omega^{-}_o})e^{\im t \omega_o}}^2_\mathbb{H} 
\\& =\textcolor{black}{ \int_{(-\pi, \pi] \setminus  \{ \omega_o\}} \big(\lsum_{j>p(\omega)} \nu^{\omega}_j \big)  d\omega + \big(\lsum_{j>p(\omega_o)} \nu^{\omega_o}_j \big).}
\end{align*}
\end{proof}

\begin{remark}[Harmonic functional principal component analysis of long-\\memory processes] \label{longmem}
In analogy to classical time series, the covariance structure of a long-memory functional time series does not decay rapidly. Without loss of generality, assume such a process will have its covariance structure satisfy
\[
\mathcal{C}_h \sim B h^{2d-1} \quad 0 < d < 0.5,
\]
where $B$ a strictly positive element of $S_1^+(H)$. It is clear that for such a process, the dependence structure does not decay rapidly enough for $\sum_{h \in \mathbb{Z}}\snorm{C_h}_p =\infty$ to hold. In order to understand what can be said about the properties of the spectral density operator, note that we can for simplicity mimic the behavior of such process by considering the linear process 
\begin{align*}
X_t = \sum_{j=0}^{\infty} \big(\lprod_{0 < k \le j}\textstyle{\frac{k-1+d}{k}}\big) \varepsilon_{t-j}
\end{align*}
where $\varepsilon_t$ is $H$-valued white noise  and hence by Theorem \ref{Herglotzthm}, the second order structure is given by
$\mathcal{C}^{\varepsilon}_0=\int_{-\pi}^{\pi} d \Fm^{\varepsilon}(\omega) =2 \pi \F^{\varepsilon}_0$.
Using the properties of the Gamma function, a standard argument shows that the filter applied to $\{\epsilon_t\}$ yields
\begin{align*}
C^{X}_h =\int_{-\pi}^{\pi} e^{\im \omega h} d \Fm^{X}(\omega) =\int_{-\pi}^{\pi} e^{\im \omega h} (1-e^{-\im \omega} )^{-2d} d \Fm^{\varepsilon}(\omega) 
\end{align*}
and hence a density of the spectral measure $\Fm^{X}$ at $\omega=0$ for $d>0$ is not defined. Yet, since this has measure $0$, we can, under the conditions of Theorem \ref{Cramer}, define a harmonic principal component analysis as in Corollary \ref{cor:Hfpca} where the number of discontinuities is $k=0$. That is, the optimal approximating process is given by 
\[
X^{\star}_t = \int_{(-\pi, \pi]}   e^{\im \omega t} \Big(\sum_{j=1}^{p(\omega)} \phi^{\omega}_j \otimes \phi^{\omega}_j \Big) dZ_{\omega} 
\]
and the minimized error is given by $\|X_t-X^{\star}_t\|^2_\mathbb{H} =\int_{(-\pi, \pi] } \big(\lsum_{j>p(\omega)} \nu^{\omega}_j \big)  d\omega.$
\end{remark}

\medskip
\noindent 
{\bf Acknowledgements.}
This work has been supported in part by the Collaborative Research Center ``Statistical modeling of nonlinear dynamic processes'' (SFB 823, Project A1, C1, A7) of the German Research Foundation (DFG).


\appendix
\setcounter{section}{0}
\setcounter{equation}{0}
\def\theequation{A\arabic{section}.\arabic{equation}}
\def\thesection{A\arabic{section}}
\section{Some background material} \label{ap:backgr}

In this section, we collect some definitions and concepts which are needed to formalize the construction of the measure, which was heuristically described in Section \ref{Herglotz} and which is described in more detail in Appendix \ref{sec:proofssec3}. 
For more background on topology (on operator algebras) we refer to, e.g., \cite{Munk,KadRing1997I,KadRing1997II,Erdman2015} and for functional analysis to \cite{Conway1990,R91}.

\begin{definition}[monoid] \label{def:monoid}
A set $M$ together with a binary operation $\circ$ is called a \text{monoid}, $(M, \circ)$ if the following axioms are satisfied
\begin{romanlist}
\item[] (closure) $\phantom{ivity}$ for any $a,b \in M$, $a \circ b \in M$;
\item[] (associativity) for any $a,b,c \in M$, we have $(a \circ b) \circ c = a \circ (b \circ c)$;
\item[] (identity) $\phantom{vity}$ there exists an $e \in M$ such that, for any $a \in M$, $e \circ a = a \circ e = a$.
\end{romanlist}
\end{definition}

\begin{definition}[monoid homomorphism]
Let $(M,\star)$ and $(N,\circ)$ be two monoids with identity elements $e_M$ and $e_N$, respectively. Then a function $f:(M,\star) \to (N,\circ)$ is a \textit{monoid homomorphism} if it preserves the monoid operation and identity element, i.e., $f(m \star n) =f(m) \circ f(n)$, for all $m,n \in M$ and $f(e_M) = e_N$.
\end{definition}

\begin{definition}[topological monoid]
A topological monoid is a monoid $(M,\circ)$ endowed with a topology $\tau$ such that the binary operation $\circ: M\times M \to M$ is continuous.
\end{definition}

For two topological monoids $(X,\circ)$ and $(Y,\star)$, we denote $\text{Hom}_{\mathrm{mon}}(X,Y)$ as the set of all monoid homomorphisms $X \to Y$.

\begin{definition}[initial topology] \label{def:inittop}
Given a set $X$ and an indexed family of topological spaces $(Y_i)_{i \in I}$ with functions $f_i : X \to Y_i$. Then the \textit{initial topology on $X$}, $\tau_{\text{in}}$, induced by the functions $(f_i)_{i \in I}$ is the coarsest topology on $X$ s.t. each 
\[f_i : (X,\tau_{\text{in}}) \to Y_i\] is continuous.
Examples of initial topologies used in Section \ref{sec:proofssec3}:
\begin{enumerate}[leftmargin=*]
\item[(i)]{\em \bf Cartesian product endowed with the product topology $\tau_p$}:\\ Let $(Y_x)_{x \in X}$ be an indexed family of sets. The \textit{cartesian product}, denoted by $\Pi_{x \in X} Y_x$ is the set of functions $f: X \to  \bigcup Y_x$ such that $f(x) \in Y_x$ for each $x \in X$. The maps  $\pi_{x_1}: \Pi_{x \in X} Y_x \to Y_{x_1}: f \mapsto f(x_1)$, $x_1 \in X$, are called the \textit{canonical coordinate projections}. The \textit{product topology} on $\Pi_{x \in X} Y_x$ is the initial topology on  $\Pi_{x \in X} Y_x$ induced by the projection maps $\pi_x, x\in X$. 
\item[(ii)]{\em \bf Sets of all functions $(Y)^X$ with the product topology  $\tau_p$}:\\ 
Let $X$ and $Y$ be sets. We denote $(Y)^X$ the \textit{set of all functions from $X$ to $Y$}. We remark that by Tychonoff's theorem, this set is compact if $Y$ is compact. The set $(Y)^X$ is the cartesian product where $Y_x = Y$ for every $x \in X$. In this case, the coordinate projections become \textit{evaluation maps}, i.e., for each $x_1 \in X$, the map $\pi_{x_1}:(Y)^X\to Y$ takes each point $f \in (Y)^X$ to its value, i.e., $\pi_{x_1}(f) = f(x_1)$. The product topology on $Y$ is known as the \textit{topology of pointwise convergence} since, when endowed with this topology,  a net of functions $(f_\alpha)$ in $(Y^X, \tau_p)$ converges to a function $f \in Y^X$ if and only if $f_\alpha(x) \to f(x)$ in $Y$ for every $x \in X$. 
\item[(iii)]{ \em \bf Subsets endowed with the subspace topology}:\\ Let $(Y,\tau)$ be a topological space and let $Y_o \subset Y$ be a subset. \textit{\em The subspace topology}, $\tau_{Y_o}$, on $Y_o$ is the initial topology with respect to the inclusion map $i: Y_o \to Y$, i.e., the map  $i(y) = y$ for all $y \in Y_o$. 
The topological space $(Y_o,\tau_{Y_o})$, is called a \textit{topological subspace} of $(Y,\tau)$. $Y_o$ is a closed subspace of $Y$ if $Y\setminus Y_o \in \tau$ (i.e., the complement is open). A closed subspace of a compact topological space is compact \citep[see e.g.][]{Munk}. 
\end{enumerate}
\end{definition}

\section{\textcolor{black}{On the construction of the measure}} \label{sec:proofssec3}

In this section we provide some more detail of the construction of the measure, which was described heuristically in Section \ref{Herglotz}. The construction is achieved by means of an embedding of the cone into its bidual cone $\phi: \lo(H)^+ \to (\lo(H)^+)^{\prime \prime}$, where $(\lo(H)^+)^{\prime \prime}$ consists of all positive continuous linear functionals on the dual cone $(\lo(H)^+)^{\prime}$. To make this more precise, we mention some properties of cones.
\subsection{Dual pair of cones} \label{dualcone}
\begin{definition}[Cones and some properties]
A subset $C$ of a real vector space $V$ is called a \textit{cone} it is closed under positive scalar multiplication, i.e., if $\lambda \in \mathbb{R}$, $c \in C$, then $\lambda c \in C$. It is called a \textit{convex cone} if it is moreover closed under linear combinations, i.e., $\lambda, \beta \in \mathbb{R}$, $c_1, c_2 \in C$, then $\lambda c_1 +\beta c_2 \in C$. 
A cone is called \textit{pointed} if it contains the zero element, i.e., if it satisfies $C \cap -C =\{0\}$, while it is called \textit{generating} if $C-C = V$.
\end{definition}
We moreover need the notion of a topological dual cone, which we shall simply refer to as the dual cone in the subsequent sections. 
\begin{definition}[Topological dual cone] 
The \textit{topological dual cone} $(C)^\prime$ to a cone $C$ of a real vector space $V$ can be defined as the set
 \[\{v \in V^{'} : \innerprod{v}{c} \ge 0 \quad \forall c \in C \}\]
where $\innerprod{\cdot}{\cdot}$ denotes the duality pairing between $V$ and its topological dual $V^{'}$.
\end{definition}

\subsection{\textcolor{black}{The embedding}} \label{sec:apem}

Given the duality pairing in \eqref{eq:pairapp} of the underlying spaces $\lo(H)^\dagger$ and $S_1(H)^\dagger$,  we can identify the cone $S_1(H)^+$ as the dual cone of $\lo(H)^+$.
\begin{proposition}\label{prop:dualcones}
The dual cone of $\lo(H)^+$, $(\lo(H)^+)^\prime $, is given by
\[
S_1^+(H)=\{ A \in S_1^\dagger(H): \tr(AB) \ge 0 \quad \forall B \in \lo(H)^+\}.
\]
\end{proposition} 
\begin{proof}[Proof of Proposition \ref{prop:dualcones}] We start by remarking that positive elements on a $C^\star$- algebra form a closed convex cone $\{a^\star a : a \in C^\star\}$\citep[see e.g.][]{Conway1990}. Since elements of $\lo(H)^+$ have a positive square root, $\lo(H)^+$ forms a pointed convex cone in the Banach algebra of bounded linear operators. By the spectral theorem it is moreover generating the space $\lo(H)^\dagger$. With a similar argument, it is straightforward to verify that $C_{V} = S_1(H)^+$ is a pointed convex cone in the Banach space $V =(S_{1}(H)^{\dagger}, \snorm{\cdot}_{1})$. Its topological dual is the Banach space $V^{'} = (\lo(H)^{\dagger}, \snorm{\cdot}_{\lo})$ with cone $C_{V^{'}}: = (\lo(H)^{+}, \snorm{\cdot}_{\lo})$ in $V^{'}$. 
We can naturally identify an element of a Banach space with an element from its bidual by means of a canonical embedding \citep[see e.g.][]{KadRing1997I}. More specifically, we have that $V$ canonically embeds into $ {(V^{'})}^{'}$ i.e., $V \subseteq {(V^{'})}^{'} \rightarrow C_{V} \subseteq (C_{V^{'}})^\prime$ . Since the cone $C_{V^{'}}$ is closed, the Hahn-Banach separation theorem implies that $C_V =(C_{V^{'}})^{\prime}$, which identifies the cone of non-negative trace class operators as the dual cone of the non-negative bounded linear operators. 
\end{proof}
To identify $\lo(H)^+$ with its image into its bidual cone $(\lo(H)^+)^{\prime \prime}$, we consider an injection $B \mapsto (A \mapsto \tr(AB))$, $A \in S_1(H)^+, B \in \lo(H)^+$. 
The image of the mapping $\phi$ is therefore simply given by
\[
\phi(\lo(H)^+=\{\phi_B\in(\rnum^+)^{S_1(H)^+} : B\in\lo(H)^+\}\tageq \label{eq:top}
\]
where $\phi_B:S_1(H)^+\to\rnum^+$ is given by $\phi_B(A)=\tr(BA)\geq 0$.The following result is essential in order to uniquely identify $\lo(H)^+$ with a family of $\rnum^+$-valued measures and to use $\phi(\lo(H)^+)$ in order to construct the compactification of $\lo(H)^+$.
\begin{proposition}\label{prop:fact}
The set $\phi(\lo(H)^+)$ coincides with the set of all positive continuous linear functionals $S_1^+(H) \to \rnum^+$, that is, $\phi(\lo(H)^+)= (S_1^+(H))^\prime$.
\end{proposition}
\begin{proof}[Proof of Proposition \ref{prop:fact}]
Recall that $\lo(H)^\dagger$ is the topological dual space of $S_1(H)^\dagger$ where the pairing is given by \eqref{eq:pairapp}. A functional on a Banach space $V$ is said to be positive if $f(v) \ge 0$ for all non-negative elements $v \in V$. It can be shown that all positive functionals on a Banach space are continuous  \citep[][Prop I.7]{Neeb98}. Since $(S_1(H)^\dagger,\snorm{\cdot}_1)$ is a Banach space this implies that all $f: S_1(H)^+ \to \rnum^+$ must be continuous. But since $\lo(H)^\dagger$ consists of all continuous linear functionals on $S_1(H)^\dagger$, all continuous positive functionals on $S_1(H)^+$ must be of the form \eqref{eq:top}.
\end{proof}
\textcolor{black}{
Given this is in place, we can now obtain that $\phi$ provides an isomorphism between $\lo(H)^+$ and $\operatorname{Hom}_{mon}(S_1(H)^+,\rnum^+)$, where the isometry property of the spaces of self-adjoint operators continues to hold when restricted to the cones.
\begin{theorem}\label{prop:fact2} 
Let $\operatorname{Hom}_{mon}(S_1(H)^+,\rnum^{+})$ denote the set of all monoid homorphisms $S_1(H)^+ \to \rnum_{+}$. Then $\phi$ in \eqref{eq:top} is an isometric isomorphism between $\lo(H)^+$ and $ \operatorname{Hom}_{mon}(S_1(H)^+,\rnum^+)$, i.e.,
\[ \phi(\lo(H)^+) = \operatorname{Hom}_{mon}(S_1(H)^+,\rnum^+) \cong_{m} \lo(H)^+.  \tageq \label{eq:isom}\]
\end{theorem}}
\begin{proof}
While it is immediate the mapping is injective, note that Proposition \ref{prop:fact} implies that for any linear functional $\varphi: S_1(H)^+ \to \rnum^+$ there exists an element $B \in \lo(H)^+$, such that $\phi(B)=\varphi$. We therefore have established that $\phi$ is a bijective map between $\lo(H)^+$ and $(S_1(H)^+)^\prime$. It is easily verified using the definition (see Appendix \ref{ap:backgr}) that the cones $\lo(H)^+, S_1(H)^+, \rnum^+$, which are not vector spaces, have the algebraic structure of topological monoids with respect to addition. It is moreover straightforward to verify that all functionals in $(S_1(H)^+)^\prime$ preserve the monoid structure between $S_1(H)^+$ and $\rnum^+$. Indeed, since any element in $(S_1(H)^+)^\prime$ is given by a functional of the form $\phi_B: S_1(H)^+ \to \rnum^+, A\mapsto \tr(BA), B \in \lo(H)^+$, we obtain from linearity of the trace that $\phi_B(A_1+A_2) = \phi_B(A_1)+\phi_B(A_2)$, while $\phi_B(O_H) = 0$, for any $B \in \lo(H)^+$ and $A_1, A_2 \in S_1(H)^+$. By Proposition \ref{prop:fact}, $\phi(\lo(H)^+)$ consists of all such monoid homomorphisms, i.e., $\phi(\lo(H)^+) = \operatorname{Hom}_{mon}(S_1(H)^+,\rnum_{+})$, where $\phi(\lo(H)^+)$ has itself a monoid structure, since the axioms of closure, associativity and identity are easily checked (See Definition \ref{def:monoid}). Hence, we find that $\phi$ is an isomorphism between the monoids $\lo(H)^+$ and $\operatorname{Hom}_{mon}(S_1(H)^+,\rnum_{+})$. It remains to verify the isometry property $\|B\|_{\infty}=\|\phi(B)\|$ for elements restricted to the cone $\lo(H)^+$. For completeness, we provide the argument. Note that since $\lo(H)^+$ generates $\lo(H)^\dagger$, the norm $\|B\|_{\infty}$ is trivially the same in $\lo(H)^+$ and $\lo(H)^\dagger$. Moreover, $\|\phi(B)\| = \sup\{ |\tr(BA)|: \snorm{A}=1, A \in S_1(H)^\dagger\} = \sup\{ |\tr(BA)|: \snorm{A}=1, A \in S_1(H)^+\}$. Now, it is immediate that $\|\phi(B)\| \le \|B\|_{\infty} \snorm{A}_1 \le \|B\|_{\infty}$. In order to show the reverse inequality, note that for nonzero elements in $B \in \lo(H)^\dagger$, we can write the operator norm as $\|B\|_{\infty}= \sup\{ |\innerprod{Bx}{y}|: \|y\|=\|x\|=1\}$, where we set $y = Bx/\|Bx\|$. From this we obtain,
\[ \|B\|_{\infty}= \sup_{\substack{\|y\|=\|x\|=1}}|\innerprod{Bx}{y}| \le  \sup_{\substack{A \in S_1(H)^+}, \snorm{A}=1}|\tr(BA)| = \|\phi(B)\|,\]
for all $B \in \lo(H)^+$. The result now follows.
\end{proof}
\subsection{\textcolor{black}{Compactification of $\lo(H)^+$-valued measures }} \label{sec:aphomcom}
\begin{proposition} \label{prop:homcom}
The set $\operatorname{Hom}_{mon}(S_1(H)^+, \rnum^+_{\infty})$ is a compact topological monoid. Furthermore,  addition is continous with respect to the topology of pointwise convergence inherited from ${(\rnum^+_\infty)}^{S_1(H)^+}$. This coincides with the ultraweak operator topology on $\lo(H)^+$.
\end{proposition}
\begin{proof}
We shall make use of some concepts which are collected in Appendix \ref{ap:backgr}. We start by showing that the set  $\operatorname{Hom}_{mon}(S_1(H)^+, \rnum^+_{\infty})$ is a compact topological monoid of monoid homomorphisms from $S_1(H)^+$ into $\rnum^+_{\infty}$. More specifically, it consists of all positive continuous linear functionals $S_1(H)^+ \to \rnum^+_{\infty}$ by Proposition \ref{prop:fact}. To see that this is a compact  topological  monoid, recall that $S_1(H)^+, \rnum^+$ are additive topological monoids, i.e., addition is continuous and the sets admit a zero element $O_H$, and $0$, respectively. Similarly, the compact set $\rnum^+_{\infty}$ can be viewed as a compact additive topological monoid, where $\infty$ is also treated as a zero element, i.e., for all $x \in \rnum^+_{\infty}, x +\infty = \infty+x = \infty$. 
The set 
\[\text{Hom}_{mon}(S_{1}(H)^+, \rnum^+_{\infty}) \tageq \label{eq:hommonap}\]
is closed when viewed as a subset of the set ${(\rnum^+_\infty)}^{S_1(H)^+}$, the set of all functions from $S_1(H)^+$ into $\rnum^+_{\infty}$. By Tychonoff's theorem, ${(\rnum^+_\infty)}^{S_1(H)^+}$ is compact since $\rnum^+_\infty$ is compact. Being a closed subset of a compact topological space, it is itself compact and the result now follows.
\\
For the second part, note that the initial topology on ${(\rnum^+_\infty)}^{S_1(H)^+}$, implies that the evaluation mappings $\pi_A(f) = f(A), f \in {(\rnum^+_\infty)}^{S_1(H)^+}$ for all $A \in S_1(H)^+$ are continuous. Being a subset of ${(\rnum^+_\infty)}^{S_1(H)^+}$ this implies for all the functionals $\phi_B$ in \eqref{eq:hommonap} that $\pi_A(\phi_B) = \phi_B(A) = \tr(BA), $ are continuous for all $A \in S_1(H)^+$. Hence, we inherit the topology of pointwise convergence. Addition is therefore continuous w.r.t. to this topology. Note that due to the form of the functionals $\phi_B$, this notion of convergence coincides exactly with convergence in the ultraweak topology as given in Definition \ref{def:uw}.
\end{proof}

\section{Proofs of section \ref{Gcram}}

\begin{proposition}\label{prop:Stochint}
Define the Bochner space $\mathcal{B}_{\infty} =L^2_{S_{\infty}(H)}([-\pi,\pi],\mu_{\Fm})$ of all strongly measurable functions $U:[-\pi,\pi]\to S_{\infty}(H)$ such that 
\[
\|U\|^2_{\mathcal{B}_\infty} =\int_\Pi \snorm{U_{\omega}}^2_{\infty} d\mu(\omega)<\infty.   \tageq \label{eq:Bnorm}
\] with
\[
\mu_\Fm(E)=\int_{E} d \mu_{\Fm}(\omega), \tageq \label{eq:measure}
\]
for all Borel sets $E\subseteq[-\pi,\pi]$. Then, for $U \in \mathcal{B}_{\infty}$, the integral
\[\int_{-\pi}^{\pi} U_{\omega}\,dZ_{\omega}\]
exists and belongs to $H$.
\end{proposition}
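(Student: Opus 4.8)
The plan is to construct $\int_{-\pi}^{\pi}U_\omega\,dZ_\omega$ exactly as one builds a stochastic integral against an orthogonal increment process: define it first on a dense class of simple integrands, establish an isometry-type bound that converts the $\Hspace$-norm of the integral into the scalar measure $\mu_{\Fm}$, and then extend by continuity. The two defining properties of $Z_\omega$ recorded in Theorem \ref{isomorph} — orthogonality of increments and the relation $\E[(Z_\omega-Z_\lambda)\otimes(Z_\omega-Z_\lambda)]=\Fm(\omega)-\Fm(\lambda)$ — are what make the construction go through.

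First I would define the integral on simple functions. For a partition $-\pi=\omega_0<\omega_1<\cdots<\omega_{n+1}=\pi$ and operators $A_0,\ldots,A_n\in S_\infty(H)$, consider $U_\omega=\sum_{i=0}^{n}A_i\,1_{(\omega_i,\omega_{i+1}]}(\omega)$ and set
\[
\int_{-\pi}^{\pi}U_\omega\,dZ_\omega:=\sum_{i=0}^{n}A_i\,(Z_{\omega_{i+1}}-Z_{\omega_i}),
\]
which is plainly an element of $\Hspace$. Writing $\Delta Z_i=Z_{\omega_{i+1}}-Z_{\omega_i}$, the central computation is the second moment of this sum. Using the elementary identity $\Tr\big(B\,(x\otimes y)\big)=\innerprod{Bx}{y}$ and orthogonality of the increments (part (iii) of Definition \ref{Zproc}, which forces $\E[\Delta Z_i\otimes\Delta Z_k]=O_H$ for $i\ne k$), all cross terms vanish, and $\E[\Delta Z_i\otimes\Delta Z_i]=\Fm(\omega_{i+1})-\Fm(\omega_i)$ from Theorem \ref{isomorph} gives
\[
\E\Bignorm{\sum_{i=0}^{n}A_i\,\Delta Z_i}^2_2
=\sum_{i=0}^{n}\E\innerprod{A_i^\dagger A_i\,\Delta Z_i}{\Delta Z_i}
=\sum_{i=0}^{n}\Tr\!\big(A_i^\dagger A_i\,(\Fm(\omega_{i+1})-\Fm(\omega_i))\big).
\]

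Each increment $\Fm(\omega_{i+1})-\Fm(\omega_i)$ is a non-negative trace-class operator, so the bound $\Tr(A^\dagger A\,C)\le\snorm{A}^2_\infty\snorm{C}_1$ for $C\in S_1(H)^+$, together with $\mu_{\Fm}(E)=\snorm{\Fm(E)}_1$, yields the key inequality
\[
\E\Bignorm{\int_{-\pi}^{\pi}U_\omega\,dZ_\omega}^2_2
\le\sum_{i=0}^{n}\snorm{A_i}^2_\infty\,\mu_{\Fm}\big((\omega_i,\omega_{i+1}]\big)
=\int_{-\pi}^{\pi}\snorm{U_\omega}^2_\infty\,d\mu_{\Fm}(\omega)=\norm{U}^2_{\mathcal{B}_\infty}.
\]
Hence $U\mapsto\int U\,dZ$ is a bounded linear map from the simple functions into the complete space $\Hspace=L^2_H(\Omega,\prob)$. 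Since simple $S_\infty(H)$-valued functions are dense in the Bochner space $\mathcal{B}_\infty$ by the standard theory of $L^2$ Bochner spaces, I would then extend by continuity: for $U\in\mathcal{B}_\infty$ pick simple $U^{(m)}\to U$ in $\mathcal{B}_\infty$; the inequality makes $\{\int U^{(m)}\,dZ\}$ Cauchy in $\Hspace$, its limit is independent of the approximating sequence by the same inequality, and this limit defines $\int U\,dZ$ as an element of $\Hspace$, hence an $H$-valued random element.

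The main obstacle is the second-moment identity in the second display: it is precisely there that orthogonality of increments kills the cross terms and $\E[\Delta Z\otimes\Delta Z]=\Delta\Fm$ converts the $\Hspace$-norm into an integral against $\mu_{\Fm}$. The only point requiring care is that the correct companion norm for the trace-class increments is the operator norm $\snorm{\cdot}_\infty$ rather than a Hilbert–Schmidt norm, which is exactly why the integrand is taken in $L^2$ with respect to $\snorm{\cdot}_\infty$; finiteness of $\mu_{\Fm}$ (Proposition \ref{prop:muFm}) guarantees that bounded simple functions, and hence a dense class, lie in $\mathcal{B}_\infty$.
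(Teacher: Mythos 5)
Your proof is correct and takes essentially the same route as the paper: the paper delegates the simple-function construction and continuity extension to Appendix B~2.3 of \citet{vde16}, proving only the auxiliary Lemma~\ref{bochnerlemma}, which supplies exactly the two ingredients your computation rests on --- vanishing of cross terms via orthogonality of the operator-mapped increments, and the bound $\Tr\big(A^{\dagger}A\,\Delta\Fm\big)\le\snorm{A}^2_{\infty}\,\Tr(\Delta\Fm)=\snorm{A}^2_{\infty}\,\mu_{\Fm}(\Delta)$. Your write-up simply makes that cited construction explicit, with the same key inequality $\E\bignorm{\int U_{\omega}\,dZ_{\omega}}^2_2\le\norm{U}^2_{\mathcal{B}_{\infty}}$ and the standard density of simple functions in the Bochner space.
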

\begin{proof}
The Proposition follows directly from section B 2.3 of \citet{vde16} by replacing Lemma B 2.5 of the corresponding paper with the following auxiliary lemma.
\end{proof}
 \begin{lemma}\label{bochnerlemma}
Let $X_t$ be a functional process with spectral representation $X_t= \int_{-\pi}^{\pi} e^{\mathrm{i}\omega t} d Z_{\omega}$ for some functional orthogonal increment process $Z_{\omega}$ that satisfies $ \E\Tr(Z_{\omega} \otimes Z_{\omega}) = \int_{-\pi}^{\omega} d \mu_{\Fm}(\alpha)$. Then for $U_1, U_2 \in S_{\infty}(H_\cnum)$ and $\alpha,\beta\in[-\pi,\pi]$
\begin{align}
\tag{$i$} \langle U_1Z_{\alpha}, U_2 Z_{\beta}\rangle_{\mathbb{H}} &=\Tr\Big(U_1 \Big[\int_{-\pi}^{\alpha\wedge\beta}d {\Fm}(\omega) \Big] U_2^{\dagger}\Big)
\intertext{and}
\tag{$ii$} \|U_1 Z_{\alpha}\|^2_{\mathbb{H}} &\le \snorm{U_1}^2_{\infty}   \int_{-\pi}^{\alpha}d \mu_{\Fm}(\omega). 
\end{align}
Consequently, for $\omega_1 > \omega_2 \ge \omega_3 > \omega_4$
\[
\inprod{U_1(Z_{\omega_1}-Z_{\omega_2})}{U_2(Z_{\omega_3}-Z_{\omega_4})}_{\mathbb{H}}=0
\]
and
\[
 \|U_1(Z_{\omega_1}-Z_{\omega_2})\|^2_{\mathbb{H}} \le \snorm{U_1}^2_{\infty} \Tr(\Fm(\omega_2)-\Fm(\omega_1))= \snorm{U_1}^2_{\infty} \int_{\omega_1}^{\omega_2} \mu_{\Fm}(\omega).
\]
\end{lemma}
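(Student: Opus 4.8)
The plan is to reduce every assertion to the second-order structure of the orthogonal increment process $Z$ obtained in Theorem~\ref{isomorph}, using identity \eqref{eq:inTr} to convert each $\Hspace$-inner product into a trace of an expected tensor product. A bounded operator $U$ acts on a random element of $\Hspace$ pointwise, and the algebraic fact I would record first is the operator identity
\[
(U_1 f)\otimes (U_2 g) = U_1\,(f\otimes g)\,U_2^{\dagger}, \qquad f,g\in H,
\]
valid for bounded $U_1,U_2$, which one checks by evaluating both sides on an arbitrary $h\in H$.

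Using this together with \eqref{eq:inTr}, part $(i)$ becomes
\[
\inprod{U_1 Z_\alpha}{U_2 Z_\beta}_{\Hspace} = \Tr\big(\E[(U_1 Z_\alpha)\otimes(U_2 Z_\beta)]\big) = \Tr\big(U_1\,\E[Z_\alpha\otimes Z_\beta]\,U_2^{\dagger}\big),
\]
where pulling the bounded maps $U_1,U_2^{\dagger}$ through the expectation is legitimate because $A\mapsto U_1 A U_2^{\dagger}$ is a bounded linear operation on $S_1(H)$ and hence commutes with the Bochner integral. It then remains to evaluate the cross-covariance $\E[Z_\alpha\otimes Z_\beta]$. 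Assuming $\alpha\le\beta$, I would decompose $Z_\beta = Z_\alpha + (Z_\beta - Z_\alpha)$; since $Z_\alpha = Z_\alpha - Z_{-\pi}$ is the increment over $(-\pi,\alpha]$ and $Z_\beta - Z_\alpha$ the increment over $(\alpha,\beta]$, property $(iii)$ of Definition~\ref{Zproc} makes the cross term vanish, leaving $\E[Z_\alpha\otimes Z_\beta] = \E[Z_\alpha\otimes Z_\alpha] = \Fm(\alpha) = \int_{-\pi}^{\alpha\wedge\beta} d\Fm(\omega)$ by the variance formula of Theorem~\ref{isomorph}. This establishes $(i)$.

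For $(ii)$ I would specialize $(i)$ to $U_2 = U_1$ and $\beta = \alpha$, so that with $G := \int_{-\pi}^{\alpha} d\Fm(\omega)\in S_1(H)^+$ the expression equals $\Tr(U_1 G U_1^{\dagger})$. Cyclicity of the trace rewrites this as $\Tr(G^{1/2} U_1^{\dagger} U_1 G^{1/2})$, and combining the operator inequality $U_1^{\dagger}U_1 \le \snorm{U_1}_{\infty}^2\, I$ with monotonicity of the trace on non-negative operators yields $\Tr(U_1 G U_1^{\dagger}) \le \snorm{U_1}_{\infty}^2\,\Tr(G)$; identifying $\Tr(G) = \snorm{\Fm((-\pi,\alpha])}_1 = \int_{-\pi}^{\alpha} d\mu_{\Fm}(\omega)$ gives $(ii)$. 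The two consequences then follow immediately: for $\omega_1 > \omega_2 \ge \omega_3 > \omega_4$ the increments $Z_{\omega_1}-Z_{\omega_2}$ and $Z_{\omega_3}-Z_{\omega_4}$ sit on the disjoint intervals $(\omega_2,\omega_1]$ and $(\omega_4,\omega_3]$, so the same computation as in $(i)$ together with property $(iii)$ forces $\E[(Z_{\omega_1}-Z_{\omega_2})\otimes(Z_{\omega_3}-Z_{\omega_4})] = 0$ and hence orthogonality, while the final bound is $(ii)$ applied to the single increment $Z_{\omega_1}-Z_{\omega_2}$, whose covariance operator is $\Fm(\omega_1)-\Fm(\omega_2)$.

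I expect the only real obstacle to be the infinite-dimensional bookkeeping rather than anything conceptual. One must ensure that $\E[Z_\alpha\otimes Z_\beta]$ is genuinely trace class so that all traces are finite and the manipulations are valid --- this is where Assumption~\ref{as:Ch} and the finiteness of $\mu_{\Fm}$ from Lemma~\ref{prop:muFm} are used --- and one must justify commuting the bounded operators through the Bochner expectation and the trace-monotonicity step in $S_1(H)$. Once the operator identity and the disjoint-increment cancellation are in place, the remainder is routine.
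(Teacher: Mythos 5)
Your argument is correct, and part $(i)$ coincides with the paper's own proof: the paper moves $U_2^\dagger$ across the inner product and uses \eqref{eq:inTr} together with cyclicity of the trace, while you use the equivalent tensor identity $(U_1 f)\otimes (U_2 g)=U_1(f\otimes g)U_2^{\dagger}$ and commute the bounded map $A\mapsto U_1AU_2^{\dagger}$ with the Bochner expectation; if anything you are more explicit than the paper, which passes from $\E \Tr\big( U_1 ( Z_{\alpha} \otimes Z_{\beta}) U_2^\dagger \big)$ to $\Tr\big( U_1 \big[\int_{-\pi}^{\alpha\wedge\beta}d \Fm(\omega) \big]  U_2^\dagger \big)$ without spelling out the evaluation $\E[Z_\alpha\otimes Z_\beta]=\Fm(\alpha\wedge\beta)$ that you justify via the decomposition $Z_\beta=Z_\alpha+(Z_\beta-Z_\alpha)$ and Definition~\ref{Zproc}$(iii)$. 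Part $(ii)$ is where you genuinely diverge: the paper proves it by Cauchy--Schwarz in $\mathbb{H}$, through the chain $\langle U_1Z_{\alpha}, U_2 Z_{\beta}\rangle_{\mathbb{H}}\le \snorm{U_1}_{\infty}\snorm{U_2}_{\infty}\,\E\|Z_{\alpha}\|_2\|Z_{\beta}\|_2\le \snorm{U_1}_{\infty}\snorm{U_2}_{\infty}\,\E\Tr(Z_{\alpha\wedge\beta}\otimes Z_{\alpha\wedge\beta})$, whereas you derive $(ii)$ as a corollary of $(i)$ with $U_2=U_1$, $\beta=\alpha$, writing $\Tr(U_1GU_1^\dagger)=\Tr(G^{1/2}U_1^\dagger U_1 G^{1/2})$ and combining $U_1^\dagger U_1\le \snorm{U_1}_{\infty}^2 I$ with monotonicity of the trace on non-negative operators. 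Your route buys a real improvement: the paper's intermediate inequality is in fact reversed for $\alpha\neq\beta$, since pointwise Cauchy--Schwarz gives $\E\|Z_{\alpha}\|_2\|Z_{\beta}\|_2\ge |\E\langle Z_\alpha,Z_\beta\rangle| = \mu_{\Fm}((-\pi,\alpha\wedge\beta])$, with equality only in the case $\alpha=\beta$ that the lemma actually states; your trace-monotonicity argument avoids this blemish and requires no separate estimate. Both treatments obtain the ``consequently'' statements identically, by applying $(i)$ and $(ii)$ to increments over disjoint intervals; note finally that your expression $\Fm(\omega_1)-\Fm(\omega_2)$ for the covariance of the increment with $\omega_1>\omega_2$ is the correct non-negative form, the ordering $\Fm(\omega_2)-\Fm(\omega_1)$ in the lemma's display being a typo that you have silently corrected.
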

\begin{proof}
Using \eqref{eq:inTr} and the invariance of the trace under cyclical permutations
\begin{align*}
 \inprod{U_1Z_{\alpha}}{U_2 Z_{\beta}}_{\mathbb{H}} & = \E \inprod{U_2^\dagger U_1 Z_{\alpha}}{Z_{\beta}}\\&
=  \E \Tr\big(U_2^\dagger U_1 ( Z_{\alpha} \otimes Z_{\beta}) \big) \\& 
=\E \Tr\big( U_1 ( Z_{\alpha} \otimes Z_{\beta}) U_2^\dagger \big) \\&
=\Tr\big( U_1 \Big[\int_{-\pi}^{\alpha\wedge\beta}d \Fm(\omega) \Big]  U_2^\dagger \big). 
\end{align*}
Secondly, we note that by Cauchy-Schwarz inequality and \eqref{eq:inTr}
\begin{align*} 
 \langle U_1Z_{\alpha}, U_2 Z_{\beta}\rangle_{\mathbb{H}} \le
&
\le \snorm{U_1}_{\infty} \snorm{U_2}_{\infty} \E\|Z_{\alpha}\|_2\|Z_{\beta}\|_2\\&
\le \snorm{U_1}_{\infty} \snorm{U_2}_{\infty} \E\Tr( Z_{\alpha \wedge \beta} 
\otimes  Z_{\alpha \wedge \beta})
\\& 
\le \snorm{U_1}_{\infty} \snorm{U_2}_{\infty}   \int_{-\pi}^{\alpha\wedge\beta}d \mu_{\Fm}(\omega) < \infty. 
\end{align*}
\end{proof}

 \begin{proof}[Proof of Proposition \ref{cramer_disc}]
 We prove the case for one discontinuity at $\omega_o$ as the argument for finitely many discontinuities is similar. 
 First we remark that the left limit $Z_{\omega^{-}_o}$ is well-defined in $\mathbb{H}$ for any non-decreasing sequence $\{\omega_n\} \uparrow \omega_o$. The limit exists because $\|Z_{\omega_m}-Z_{\omega_n} \|^2_{\mathbb{H}} =  |\mu_{F}({\omega_m})-\mu_{F}({\omega_n})| \to 0$ as $m,n \to \infty$ and the limit is unique for all $\{\nu_n\} \uparrow \omega_o$ since $\|Z_{\nu_n}-Z_{\omega_n} \|^2_{\mathbb{H}} =  |\mu_{F}({\nu_n})-\mu_{F}({\omega_n})| \to 0$ as $n \to \infty$.
 Under the conditions of Theorem \ref{Cramer}, $\{X_t\}$ has a well-defined spectral representation, which can alternatively be written as
\begin{align}\label{specdecom}
X_t = \int_{(-\pi, \pi] \setminus  ( \omega_o-\delta, \omega_o+\delta]} e^{\im t \omega} d Z_{\omega} +\int_{ ( \omega_o-\delta, \omega_o\delta]} e^{\im t \omega} d Z_{\omega},
\end{align}
for $0<\delta < \pi -|\omega_o|$. It can be directly observed that, by orthogonality of these two integrals and continuity of the inner product that the mean square limit of these two terms must be orthogonal. We can therefore treat their respective limits separately. It is straightforward to see that
\[
\| e^{\im t \cdot} 1_{(-\pi, \pi] \setminus  ( \omega_o-\delta, \omega_o+\delta]}(\cdot) -  e^{\im t \cdot} 1_{(-\pi, \pi ]\setminus  \{\omega_o\}}(\cdot) \|_{ \bar{\mathcal{H}}} \to 0 \text{ as } \delta \to 0,
\]
and thus for the first term of \eqref{specdecom}, we find 
\[
\Bignorm{\int_{(-\pi, \pi] \setminus  ( \omega_o-\delta, \omega_o+\delta]} e^{\im t \omega} d Z_{\omega} - \int_{(-\pi, \pi] \setminus  \{\omega_o\} } e^{\im t \omega} d Z_{\omega} }^2_{\mathbb{H}} \to 0 \text{ as } \delta \to 0.
\]
For the second term in \eqref{specdecom}, Minkowski's inquality implies
\begin{align*}
& \Bignorm{ \int_{ ( \omega_o-\delta, \omega_o+\delta]} e^{\im t \omega} d Z_{\omega} -(Z_{\omega_o} - Z_{\omega^{-}_o})e^{\im t \omega_o} }^2_{\mathbb{H}} 
\\& \le
\Bignorm{ \int_{( -\pi, \pi]} e^{\im t (\omega-\omega_o)} 1_{( \omega_o-\delta, \omega_o+\delta]}(\omega) d Z_{\omega}}^2_{\mathbb{H}}  +
\Bignorm{(Z_{\omega_o+\delta} - Z_{\omega_o-\delta})e^{\im t \omega_1} - (Z_{\omega_o} - Z_{\omega^{-}_o})e^{\im t \omega_o} }^2_{\mathbb{H}}. \tageq \label{decompose2}
\end{align*}
For the first term in  \eqref{decompose2}, the isometric mapping established in Theorem \ref{isomorph} together with continuity of the function $e^{\im t \cdot}$ on $\mathbb{R}$ imply
\begin{align*}
\Bignorm{ &\int_{( -\pi, \pi]} e^{\im t (\omega-\omega_o)}  1_{( \omega_o-\delta, \omega_o+\delta]}(\omega) d Z_{\omega}}^2_{\mathbb{H}} \\& \le
\Big({\sup_{ \omega_o-\delta\le \omega\le \omega_o+\delta}|e^{\im t (\omega-\omega_o)}| \big[\mu_{\Fm}(\omega_o+\delta)-\mu_{\Fm}(\omega_o-\delta) \big] \Big)}^{1/2} \to 0 \text{ as } \delta \to 0.
\end{align*}
By right-continuity of the functional-valued increment process $\{Z_{\omega}\}$, the second term in  \eqref{decompose2} converges to $0$ as $\delta \to 0$. Hence, with probability one,
\begin{align}
 X_t = \int_{(-\pi, \pi] \setminus  \{ \omega_o\}} e^{\im t \omega} d Z_{\omega} + (Z_{\omega_o} - Z_{\omega^{-}_o})e^{\im t \omega_o}.
\end{align}
Finally, since the left limit $Z_{\omega^{-}_o}$ is well-defined, Theorem \ref{isomorph} implies
\[
\Var(Z_{\omega_o}-Z_{\omega^{-}_o})=\lim_{\omega_n \uparrow \omega_o}\mathbb{E}\big[(Z_{\omega_o}-Z_{\omega_n}) \otimes( Z_{\omega_o}-Z_{\omega_n})\big]= \Fm(\omega_o)-  \Fm({\omega^{-}_o}).\]
\end{proof}


\end{document}